\def\R{{\mathbb{R}}}
\def\a{\alpha}
\def\id{{\text{\rm Id}}}
\def\noi{\noindent}
\def\sign{\text{\rm sign\,}}
\def\sdeg{S^1\text{-Deg\,}}
\def\s1deg{S^1\text{\rm -Deg\,}}
\newcommand\cD{\ensuremath{\mathcal D}}
\newcommand\cF{\ensuremath{\mathcal F}}
\newcommand\cO{\ensuremath{\mathcal O}}
\newcommand\cP{\ensuremath{\mathcal P}}
\newcommand\cS{\ensuremath{\mathcal S}}
\newcommand\cT{\ensuremath{\mathcal T}}
\newcommand\fA{\ensuremath{\mathscr A }}
\newcommand\fF{\ensuremath{\mathfrak F}}
\newcommand\fQ{\ensuremath{\mathfrak Q}}
\newcommand\fR{\ensuremath{\mathfrak R}}
\newcommand\fS{\ensuremath{\mathfrak S}}
\newcommand\bbC{\ensuremath{\mathbb C}}
\newcommand\bbN{\ensuremath{\mathbb N}}
\newcommand\bbR{\ensuremath{\mathbb R}}
\newcommand\bbZ{\ensuremath{\mathbb Z}}
\definecolor{mygreen}{rgb}{0,.66,.05}
\definecolor{lightyellow}{rgb}{1,1,.80}
\newtheorem{theorem}{Theorem}[section]
\newtheorem{proposition}[theorem]{Proposition}
\newtheorem{lemma}[theorem]{Lemma}
\newtheorem{definition}[theorem]{Definition}
\newtheorem{remark}[theorem]{Remark}
\newtheorem{example}[theorem]{Example}
\newtheorem{remark-definition}[theorem]{Remark and Definition}
\begin{document}
\title{Sliding Hopf bifurcation in interval systems}

\author{E. Hooton$^{1,*}$, Z. Balanov$^{1}$,  W. Krawcewicz$^{1}$, D. Rachinskii$^{1}$}

\date{}




\maketitle

\footnote{$^{1}$ Department of Mathematical Sciences, University of Texas at Dallas, Richardson, Texas, 75080 US.
Emails: exh121730@utdallas.edu, balanov@utdallas.edu, wieslaw@utdallas.edu, dmitry.rachinskiy@utdallas.edu}

\footnote{$^{*}$ Corresponding author. Mailing address: Department of Mathematical Sciences, The University of Texas at Dallas, 800 W. Campbell Rd, Richardson, Texas, 75080 US. Email: exh121730@utdallas.edu}

\begin{abstract}
In this paper, the equivariant degree theory is used to analyze the occurrence of the Hopf bifurcation under effectively verifiable mild conditions. We combine the abstract result
with standard interval polynomial techniques based on Kharitonov's theorem to show the existence of a branch of periodic solutions emanating from the equilibrium in the settings relevant to robust control. The results are illustrated with a number of examples.
\end{abstract}

\section{Introduction}\label{sec:Intro}

{\bf Subject and goal.}
Many problems in
population dynamics, neural networks, fluid dynamics, solid mechanics, elasticity,
chemistry, mechanical and electrical engineering lead to studying the so-called Hopf bifurcation (more precisely, Poincar\'e-Andronov-Hopf bifurcation) in
dynamical systems parameterized by a real parameter (see, for example, \cite{MarsdenMcK,AED,GolSchSt,IV-B,KW} and references therein). To be more specific,
given a parameterized family
\begin{equation}\label{eq:sys-HB}
 \dot{x} = f(\alpha,x), 
 \qquad \alpha \in [\alpha_-,\alpha_+], \; x \in \mathbb R^d,
\end{equation}
where $f : [\alpha_-,\alpha_+] \times \mathbb R^d \to \R^d$ is a continuous map and $(\alpha,0)$ is a curve of trivial stationary solutions,
the Hopf bifurcation is a phenomenon occurring when $\alpha$ crosses
some critical value $\alpha_o$ (for which the linearization $D_x f(\alpha,0)$ admits a purely imaginary eigenvalue)
and resulting in appearance of a branch of small amplitude periodic solutions near the curve $(\alpha, 0)$. In his original work \cite{Hopf}, E.~Hopf studied system \eqref{eq:sys-HB} under the following assumptions:  (a) $f$ is analytic in both variables; (b) for $\alpha = \alpha_o$, exactly two complex conjugate characteristic roots $\mu(\alpha)$ and  $\overline{\mu(\alpha)}$ intersect the imaginary axis (absence of multiple/resonant roots); (c) $\mu(0) \not=0$ (exclusion of steady-state bifurcation); and, (d) ${\rm Re}\, \mu^{\prime}(0) \not=0$ (transversality).
Hopf's theorem includes conditions for the occurrence of the bifurcation (i.e., the existence result) and conditions for stability of small cycles bifurcating from the stationary point.
After this pioneering work, a substantial effort was made in order to relax conditions (a)--(d) (see, for example, \cite{MarsdenMcK,AED,GolSchSt,IV-B,KW,ChowMPYorke,KK} and references therein).  One objective of this paper is  to present an abstract result on the occurrence of the Hopf bifurcation in  \eqref{eq:sys-HB} under very mild (and effectively verifiable) hypotheses 
containing many known occurrence results as a particular case (cf.~Theorem \ref{thm:main-theorem}, Theorem \ref{thm:corollaries-all} and Remark \ref{rem:comparison}).
It should be stressed that we do not study stability of bifurcating periodic solutions.

Our choice of the conditions on the nonlinearity $f$ and its derivative $D_xf(\alpha,0)$ is essentially determined by the following observations. In analysis and design, it is customary to deal with approximations of complex  models that have some degree of uncertainty (one can think of the so-called nominal systems widely used in robust control; see, for example, \cite{BCK}).
Considering a model with  uncertain parameters, one can expect that the entries of the matrix  $D_xf(\alpha,0)$ belong to some known intervals of values rather than
being represented by fixed numbers. This suggests to study the Hopf bifurcation phenomenon for a class of systems \eqref{eq:sys-HB}
 where coefficients of the linearization are limited to known intervals. In this setting, the characteristic polynomial of $D_xf(\alpha,0)$ that defines the stability properties of the linearization
also becomes an {\it interval polynomial} (see, for example, \cite{BCK}).
Importantly, this setting includes the scenario when the characteristic values of the linearization of a representative system \eqref{eq:sys-HB} {\it slide} along the imaginary axis when the bifurcation parameter is varied (see Figure \ref{fig:Sliding-Introduction}a).
\begin{figure}[ht]
\begin{center}
\includegraphics*[width=0.3\columnwidth]{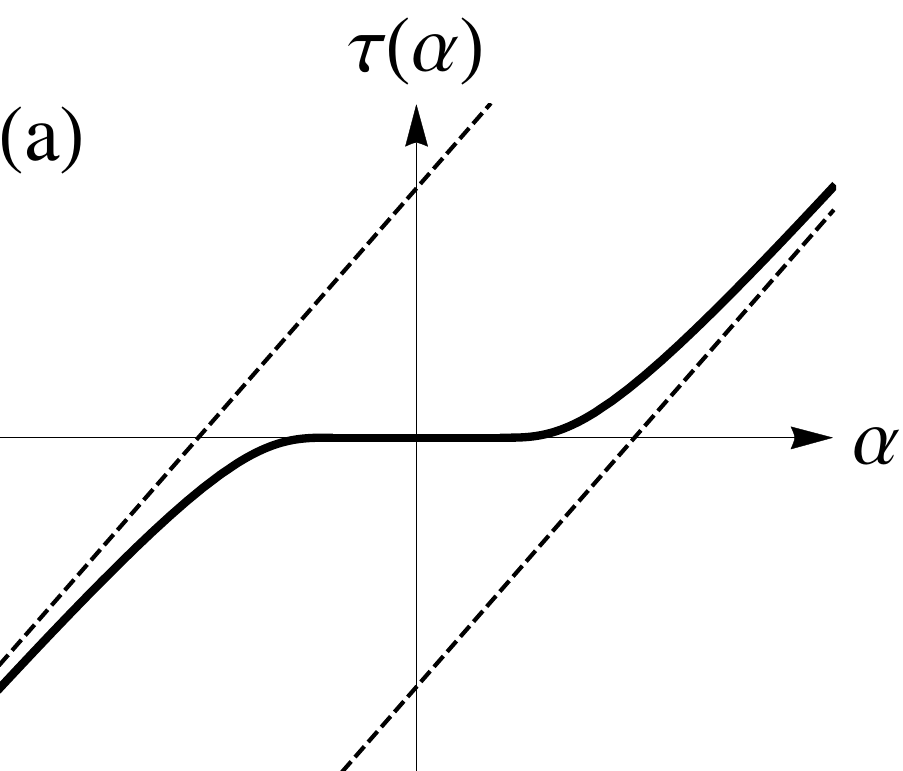} \qquad\quad \includegraphics*[width=0.3\columnwidth]{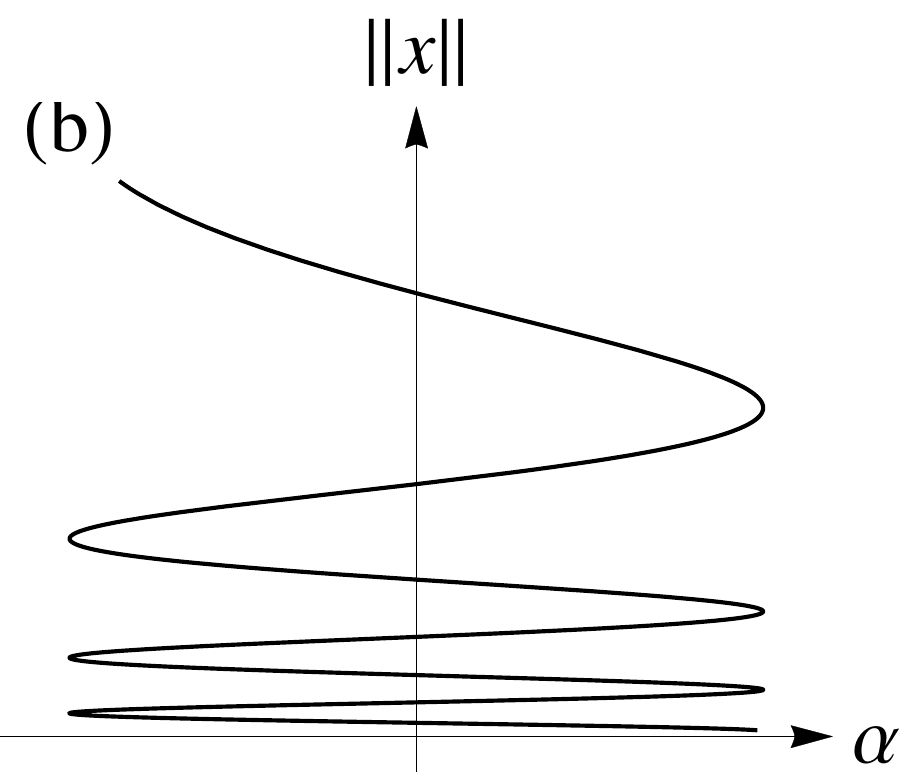}
\end{center}
\caption{(a) Given an $\alpha$-parametrized family of characteristic polynomials with unknown coefficients that are limited to some intervals, the dashed lines bound a corridor
for the real part $\tau(\alpha)={\rm Re}\, \mu(\alpha)$ of an eigenvalue, while the solid line indicates a sliding scenario for some selector of the family.
(b) Possible complex behavior of the branch of periodic solutions.}
\label{fig:Sliding-Introduction}
\end{figure}
The main goal of the present paper is to propose a method for analysis of the occurrence of the Hopf bifurcation in the presence of such sliding.

As a matter of fact, the sliding phenomenon makes the problem non-local. Namely, it does not allow one to localize a bifurcation point
on the basis of the knowledge of the linearization, that is based on the condition ${\rm Re}\, \mu(\alpha)=0$ (see Figure \ref{fig:Sliding-Introduction} a,b).


To study the Hopf bifurcation in this setting, one needs to deal with the whole interval of sliding that consists of potential bifurcation points.
Thus, sliding is in sharp contrast to the transversality condition (d) above. At the same time, to the best of our knowledge, all the existing results on the occurrence of Hopf bifurcation identify explicitly a critical value of the parameter $\alpha$ at which {\rm Re}\,$\mu(\alpha)$ changes its sign (for the least restrictive condition of this type, we refer to \cite{KK}). Some conditions for the existence of a branch of cycles that are non-local with respect to the parameter can be found in \cite{KRB, KRB1, KRB2}.

The simplest scenario which includes sliding and is covered by our results is the following.
Suppose that system  \eqref{eq:sys-HB} has an equilibrium $x=0$ for all values of the parameter $\alpha\in [\alpha_-,\alpha_+]$.
Assume that the linearization $D_x f(\alpha,0)$ of the right hand side is invertible
and has {\em at most} one pair of purely imaginary eigenvalues for any $\alpha\in [\alpha_-,\alpha_+]$. Finally, assume that the zero equilibrium
is hyperbolic for $\alpha=\alpha_\pm$ and the dimension of the stable manifold of the linearization of \eqref{eq:sys-HB} at zero
is different for $\alpha=\alpha_-$ and $\alpha=\alpha_+$. Then there is a Hopf bifurcation point on the interval $(\alpha_-,\alpha_+)$.
Theorem \ref{thm:main-theorem} presented below also covers  more complex scenarios including multiple and resonant eigenvalues of the linearization on the imaginary axis.


{\bf Method.}
In \cite{Hopf}, the Hopf bifurcation in \eqref{eq:sys-HB} was studied based on the series expansion of $f$. The further progress was related to the methods rooted in the singularity theory: assuming that the system satisfies several regularity and genericity conditions, one can combine the normal form classification
with Center Manifold Theorem/averaging method/Lyapunov-Schmidt reduction. For a detailed exposition of these
concepts and related techniques, we refer to \cite{GS,GolSchSt,MarsdenMcK}.

Being very effective in the settings they are usually applied to, the singularity theory based methods meet difficulties if a setting is not regular/generic enough. For example,
dynamical systems with hysteresis components admit linearization at the origin while any small neighborhood of the origin contains non-differentiability points which makes
the Center Manifold Reduction impossible (see \cite{BKRZ,d1,d2,d3,d4,d5,d6}) for details).  As long as the stability of bifurcating solutions is not questioned, one can use homotopy theory based methods. Important steps in this direction were done in \cite{AY} (framed bordism theory), \cite{ChowMPYorke} (Fuller index), \cite{KK} (parameter functionalization method combined with the Leray-Schauder degree), to mention a few.

During the last twenty years the {\it equivariant degree theory} emerged in non-linear analysis (for the detailed
exposition of this theory, including historical remarks, we refer to
recent monographs \cite{AED,IV-B} and surveys \cite{BKRHandbook,survey,Ize-Handbook}; for
the prototypal invariants, see \cite{Dancer, Dancer1, Fuller,KB}).
The equivariant degree, being the main topological tool used in this paper, is an instrument that allows
``counting'' orbits of solutions to symmetric equations in the same
way as the usual Brouwer degree does, but according to their
symmetry properties. In particular, the equivariant degree theory
has all the attributes allowing its application in
non-smooth and non-generic equivariant settings related to equivariant dynamical systems
having, in general, infinite dimensional phase spaces with lack of linear structure (cf.~\cite{BKRZ}). We refer to \cite{IV-B,AED} and references therein for the equivariant degree treatment of the (symmetric) Hopf bifurcation in different environments (see also \cite{Kiel}).  In the present paper, we use the $S^1$-degree with one free parameter (see \cite{AED}
for the axiomatic approach).

Theorem \ref{theor:Haritonov2} below explicitly refers to the verification of stability properties of interval polynomials (cf.~conditions
{\bf {(R3)}} and {\bf {(R4)}}). Among very few results on the connection between
perturbations of the coefficient and root locations, Kharitonov's theorem (\cite{Kharitonov}, see also \cite{BCK,HK}) takes a firm position. To be more specific,
V. L. Kharitonov showed that given a family of interval polynomials with real coefficients,
it is necessary and sufficient to test just four canonically defined  members of the family
in order to decide that all polynomials are Hurwitz stable. The main topological ingredient of Kharitonov's proof is the so-called
Zero Exclusion Principle (in short ZEP) which can be traced back to the classical Argument principle in Complex Analysis. In this paper, combining  ZEP with simple combinatorial
arguments, we establish a Kharitonov type result for the so-called $k$-stable interval polynomials (cf.~Lemma \ref{Interval_one_root} and Definition \ref{def-monic-k-stable}). In particular, it shows that Kharitonov's approach is sensitive not only to Hurwitz stability, but also to the change of the dimension of the stable manifold in families of interval polynomials which is crucial for studying the Hopf bifurcation phenomenon.


The paper is organized as follows. In the next section, 
we present some background related to the
Hopf bifurcation and interval polynomials. In Section \ref{sec:formulations},  main results are formulated
(see Theorems  \ref{thm:main-theorem}, \ref{thm:corollaries-all} and \ref{theor:Haritonov2}).
Some examples illustrating Theorems \ref{thm:corollaries-all} and \ref{theor:Haritonov2} are given in Section \ref{sec:examples}.
Section \ref{sec:main-theorem} contains the proof of Theorem \ref{thm:main-theorem} which
is close in spirit to the proofs of Theorems 9.18 and 9.24 from \cite{AED}. In Section \ref{sec:proofs-remaining}, we provide the proofs of remaining results.
A brief summary of properties of the $S^1$-equivariant degree is presented in Appendix.

\section{Preliminaries}\label{sec:prelim}

\subsection{Hopf bifurcation}
The Hopf bfurcation being the main subject of the present paper is formalized in the following definition (cf.~\cite{KK,AED}).

\begin{definition}\label{YTM}{
Consider a non-empty set $\Gamma$ of non-constant periodic solutions $(\a,p,x(t))$ of system (\ref{eq:sys-HB}) (where $p$ is the minimal period of $x(t)$) such that 
$p\in [p_-,p_+]\subset (0,\infty)$. 
The set $\Gamma$ is called a branch bifurcating  from the trivial solution if the union of $\Gamma$ and the set of trivial solutions, $\Gamma\bigcup\, [\alpha_-,\alpha_+]\times [p_-,p_+]\times \{x=0\}$, is a connected compact set.}
\end{definition}

If $\Gamma$ is a branch of non-constant periodic solutions bifurcating  from the trivial solution, then the interval $[\alpha_-,\alpha_+]$ contains at least one Hopf bifurcation point $\alpha_0$
in the weak sense of \cite{KK}. In other words, there are converging sequences $\alpha_k\to \alpha_0$ and $p_k\to p_0>0$ such that system (\ref{eq:sys-HB}) with $\alpha=\alpha_k$ has a non-constant
periodic solution $x_k(t)$ with the minimal period $p_k$ and $\|x_k\|_C\to 0$. If the necessary condition for the Hopf bifurcation (see, Section \ref{ddd}) is satisfied
at exactly one point $\alpha_0\in (\alpha_-,\alpha_+)$, then Definition \ref{YTM} reduces to the definition of the Hopf bifurcation used in \cite[p.~260]{AED}.  However,
the setting of Definition \ref{YTM} does not exclude a possibility of more complex behavior of the branch shown in Figure \ref{fig:Sliding-Introduction}b in the case of an eigenvalue
sliding along the imaginary axis as in Figure \ref{fig:Sliding-Introduction}a.

\subsection{Interval polynomials and Kharitonov's theorem}
Following \cite{BCK}, an interval matrix, denoted
$$\fA = (I_{kj})^n_{k,j=0}  =\{A:A_{kj}\in I_{kj},\ k,j=1,\ldots,n\},$$
is the set of all matrices whose $(k,j)$-th entry lies in the interval $I_{kj}$. Similarly, for interval polynomials,
$$
\cS = I_0 + I_1\lambda+\cdots+I_{n-1}\lambda^{n-1}+I_n\lambda^n =\{P=a_0+a_1\lambda+\cdots+a_n\lambda^n: a_k\in I_k,\ k=1,\ldots,n\}.
$$
Naturally, ${\cal S}_1+ {\cal S}_2=\{P_1+P_2:\ P_1\in {\cal S}_1,\ P_2\in {\cal S}_2\}$ and ${\cal S}_1\cdot {\cal S}_2=\{P_1P_2:\ P_1\in {\cal S}_1,\ P_2\in {\cal S}_2\}$.

Also, we will need the following definition.

\begin{definition}\label{def-monic-k-stable}

(i) A polynomial  with real coefficients is called {\it monic} if its highest coefficient is one. An interval polynomial $\cS$ is called {\it monic} if any $P \in \cS$ is monic.

\smallskip

(ii) Let $\cS$ be a monic interval polynomial of degree $n$. We say that $\cS$ is $q$-stable (resp., $q$-unstable) if for any $P\in\cS$, $P$ has exactly $q$ roots with ${\rm Re}\,(z)<0$ and $n-q$ roots with ${\rm Re}\,(z)>0$ (resp., $q$ roots with ${\rm Re}\,(z)>0$ and $n-q$ roots with ${\rm Re}\,(z)<0$).
\end{definition}

The classical Hurwitz stability is, therefore, called $0$-instability in our terminology.  Given an interval polynomial
\begin{equation}\label{interval_polynomial}
 \cS = I_0 + I_1\lambda +\cdots+I_{n-1}\lambda ^{n-1}+\lambda ^n,  \quad I_j=[a_j,b_j],
\end{equation}
we denote
\begin{equation}\label{eq:corner-polynomials}
\begin{aligned}
g_1(\cS, \lambda)&= a_0+b_2\lambda ^2+a_4\lambda ^4+\cdots;\qquad
&g_2(\cS, \lambda)&= b_0+a_2\lambda ^2+b_4\lambda ^4+\cdots; \\
h_1(\cS, \lambda) &= a_1\lambda +b_3\lambda ^3+a_5\lambda ^5+\cdots;\qquad
&h_2(\cS, \lambda)&= b_1\lambda +a_3\lambda ^3+b_5\lambda ^5+\cdots.
\end{aligned}
\end{equation}
Notice that for any $P\in\cS$,
\begin{equation}\label{rectangle_inequalities}
\begin{aligned}\text{Re}\,(g_1(\cS, i\omega))\leq \text{Re}\,(P(i\omega))\leq  \text{Re}\,(g_2(\cS, i\omega)),\quad
\text{Im}\,(h_1(\cS, i\omega))\leq\text{Im}\,(P(i\omega))\leq \text{Im}\,(h_2(\cS, i\omega)).
\end{aligned}
\end{equation}

The following classical result regarding stability of interval polynomials is known as Kharitonov's theorem (see \cite{Kharitonov,HK,BCK}).

\begin{theorem}[Kharitonov]\label{thm:Kharitonov}
The interval polynomial \eqref{interval_polynomial}
is Hurwitz stable if and only if the following polynomials are Hurwitz stable:
\begin{align*}
g_1(\cS,\cdot) & + h_1(\cS,\cdot), & g_1(\cS,\cdot) & + h_2(\cS,\cdot), &g_2(\cS,\cdot) & + h_1(\cS,\cdot), & g_2(\cS,\cdot) & + h_2(\cS,\cdot).
\end{align*}
\end{theorem}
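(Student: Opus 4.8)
\medskip
\noindent\emph{Proof plan.} I would argue through the Zero Exclusion Principle (ZEP), taken as a known consequence of the Argument Principle (see \cite{BCK}): a connected family $\mathcal P$ of polynomials of fixed degree $n$ consists entirely of Hurwitz stable polynomials if and only if (a) at least one member of $\mathcal P$ is Hurwitz stable, and (b) $0\notin\{P(i\omega):P\in\mathcal P\}$ for every $\omega\in\mathbb R$. Necessity of the stated condition is immediate: selecting each coefficient in \eqref{interval_polynomial} at an endpoint of its interval exhibits the four polynomials $g_j(\cS,\cdot)+h_k(\cS,\cdot)$ ($j,k\in\{1,2\}$) as members of $\cS$, so they are Hurwitz stable whenever $\cS$ is. For sufficiency, suppose the four Kharitonov polynomials are Hurwitz stable. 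Identifying a polynomial with its coefficient vector, $\cS$ is a box, hence convex and connected, and every member has degree exactly $n$ since the leading coefficient is fixed at $1$; as $\cS$ contains the four stable corner polynomials, (a) holds and it remains to verify (b).

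To that end I would first note that for $\omega\ge 0$ the inequalities \eqref{rectangle_inequalities} place the value set $V(\omega):=\{P(i\omega):P\in\cS\}$ inside the closed axis-parallel rectangle
\[
R(\omega):=\bigl[\text{Re}\,g_1(\cS,i\omega),\,\text{Re}\,g_2(\cS,i\omega)\bigr]\times\bigl[\text{Im}\,h_1(\cS,i\omega),\,\text{Im}\,h_2(\cS,i\omega)\bigr]\subset\mathbb C,
\]
whose four vertices are exactly the values $g_j(\cS,i\omega)+h_k(\cS,i\omega)$ of the four Kharitonov polynomials at $i\omega$ (because $g_j(\cS,i\omega)$ is real and $h_k(\cS,i\omega)$ purely imaginary). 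Since $V(-\omega)=\overline{V(\omega)}$, only $\omega\ge 0$ need be considered; and $R(0)=[a_0,b_0]\times\{0\}$ with $a_0>0$ (a Hurwitz polynomial has positive constant term), so $0\notin R(0)$. Hence it is enough to prove $0\notin R(\omega)$ for every $\omega>0$, for then $0\notin V(\omega)$ for all $\omega\in\mathbb R$, which is (b).

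The hard part will be this exclusion of the origin from all the rectangles. Suppose it fails; by continuity of $\omega\mapsto R(\omega)$ together with $0\notin R(0)$ there is a least $\omega_1>0$ with $0\in R(\omega_1)$, and minimality forces $0$ onto the boundary of $R(\omega_1)$. The vertices are nonzero, being values of Hurwitz polynomials, so $0$ lies in the relative interior of an edge; by the symmetry between the two coordinate axes it suffices to treat a horizontal edge, say the one joining $g_1(\cS,i\omega_1)+h_1(\cS,i\omega_1)$ to $g_2(\cS,i\omega_1)+h_1(\cS,i\omega_1)$. That $0$ lies on it forces $\text{Im}\,h_1(\cS,i\omega_1)=0$, so the two endpoints are the real numbers $g_1(\cS,i\omega_1)<0<g_2(\cS,i\omega_1)$. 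Now I would invoke the Mikhailov / Hermite--Biehler description of Hurwitz polynomials (again a consequence of the Argument Principle): for a Hurwitz polynomial $Q$ of degree $n$ with positive leading coefficient, $\arg Q(i\omega)$ increases strictly on $[0,\infty)$ from $0$ to $n\pi/2$, so $\omega\mapsto Q(i\omega)$ meets the real axis exactly at the (transversal) crossings with $\arg Q(i\omega)\in\pi\mathbb Z$. Applying this to $K_{11}:=g_1(\cS,\cdot)+h_1(\cS,\cdot)$ and $K_{21}:=g_2(\cS,\cdot)+h_1(\cS,\cdot)$, which share the odd part $h_1$ and hence satisfy $\text{Im}\,K_{11}(i\omega)=\text{Im}\,K_{21}(i\omega)$ for all $\omega$: both plots cross the real axis at exactly the same frequencies in $[0,\omega_1]$. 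But the $K_{21}$-plot runs from $\arg=0$ at $\omega=0$ to $\arg K_{21}(i\omega_1)\equiv 0\pmod{2\pi}$ (since $K_{21}(i\omega_1)>0$), hence meets the real axis an odd number of times on $[0,\omega_1]$, whereas the $K_{11}$-plot runs from $\arg=0$ to $\arg K_{11}(i\omega_1)\equiv\pi\pmod{2\pi}$ (since $K_{11}(i\omega_1)<0$), hence an even number of times. This parity clash is the contradiction. A vertical edge is dispatched identically, with the real axis replaced by the imaginary axis and $g_1,g_2$ and $h_1,h_2$ interchanging roles. Thus (b) holds and ZEP yields the Hurwitz stability of $\cS$.

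The only substantial ingredients are ZEP and the monotone-phase (Mikhailov) criterion, both standard and both traceable to the Argument Principle; the rest is the bookkeeping above. I expect the main obstacle to lie in packaging these two topological inputs cleanly --- in particular the transversality of the axis crossings of a Hurwitz Mikhailov plot (what makes the crossings countable through the phase increment) and the continuity of $\omega\mapsto R(\omega)$ used to locate the first contact with the origin --- rather than in the combinatorial count itself.
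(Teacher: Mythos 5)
Your proposal is correct, but note that the paper does not actually prove this theorem: it is quoted as a classical result with references to \cite{Kharitonov,HK,BCK}, the only in-paper remark being that the Zero Exclusion Principle is the main topological ingredient (the paper proves only the easier Lemma \ref{Interval_one_root}, where zero exclusion is \emph{hypothesized} rather than derived). Your argument supplies precisely the missing substantial step: deriving $0\notin R(\omega)$ for all $\omega>0$ from the stability of the four corner polynomials, via the Hermite--Biehler/Mikhailov monotone-phase property --- essentially the standard value-set proof of Kharitonov's theorem (in the style of Minnichelli--Anagnost--Desoer), and your parity count at the first contact frequency is a valid packaging of it: since $K_{11}$ and $K_{21}$ share the odd part, their Mikhailov plots cross the real axis at the same frequencies, yet strict phase monotonicity together with $K_{11}(i\omega_1)<0<K_{21}(i\omega_1)$ forces the two crossing counts on $[0,\omega_1]$ to have opposite parities. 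Two small points of bookkeeping you should make explicit when writing it up: (i) the phrase ``$0$ lies in the relative interior of an edge'' should be read as ``at least one of the four defining equalities, say $\mathrm{Im}\,h_1(\cS,i\omega_1)=0$, holds, and then the two adjacent corner values are nonzero and hence straddle $0$ strictly'' --- this also disposes of the degenerate case where the rectangle collapses, since two adjacent equalities would make a corner value vanish, contradicting Hurwitz stability of that corner; (ii) the finiteness of the crossing set and the exactness of the count follow from strict monotonicity of the argument (each multiple of $\pi$, respectively of $\pi/2$ in the vertical-edge case, is attained exactly once), which is worth saying since the whole contradiction is a counting one. With these spelled out, your proof is complete and consistent with (indeed, fills in) the paper's indication that ZEP is the key ingredient.
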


We will use a $q$-unstable variant of Kharitonov's theorem.

\begin{lemma}\label{Interval_one_root}
If a polynomial $P_o\in \cS$ is $q$-unstable and $$({\rm Re}\,(g_1(\cS, i\omega))\cdot {\rm Re}\,(g_2(\cS, i\omega))\;,\; {\rm Im}\,(h_1(\cS, i\omega))\cdot {\rm Im}\,(h_2(\cS, i\omega))) \notin \{(x,y):x\leq0,y\leq0\}$$ for any $\omega\geq0$, then the interval polynomial $\cS$ is $q$-unstable.
\end{lemma}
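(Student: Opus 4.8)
The plan is to deduce the lemma from the Zero Exclusion Principle (ZEP) together with the rectangle inequalities \eqref{rectangle_inequalities}. Being monic of degree $n$, the family $\cS$ is parametrized by the box $I_0\times\cdots\times I_{n-1}$ of admissible coefficient vectors (the leading coefficient being fixed at $1$); this box is convex and bounded, so $\cS$ is a path-connected family of degree-$n$ polynomials whose coefficients depend continuously — indeed linearly — on the parameter, and whose roots therefore vary continuously and stay in a fixed bounded region. Hence the number $\nu^-(P)$ of roots of $P$ with $\mathrm{Re}\,z<0$ is a locally constant, integer-valued function on the set of such polynomials that have no root on the imaginary axis: a contour in the open left half-plane enclosing exactly the left-half-plane roots of a given $P$ encloses the same number of roots for every nearby $P$, by the Argument Principle / Rouch\'e, which is the mechanism underlying ZEP. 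Consequently, once we know that \emph{no} $P\in\cS$ has an imaginary-axis root, $\nu^-$ is constant along $\cS$, equal to $\nu^-(P_o)=n-q$ because $P_o$ is $q$-unstable; and then, there being no axis roots, every $P\in\cS$ has exactly $q$ roots with $\mathrm{Re}\,z>0$ and $n-q$ with $\mathrm{Re}\,z<0$, i.e.\ $\cS$ is $q$-unstable.

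So the whole proof reduces to the claim that $P(i\omega)\ne 0$ for every $P\in\cS$ and every real $\omega$. First I would reduce to $\omega\ge 0$: if $P(i\omega)=0$ with $\omega<0$ then, $P$ having real coefficients, $P(-i\omega)=\overline{P(i\omega)}=0$ with $-\omega>0$. Now suppose, toward a contradiction, that $P(i\omega)=0$ for some $P\in\cS$ and some $\omega\ge0$. Since $\mathrm{Re}\,P(i\omega)$ is an alternating sum involving only the even-index coefficients of $P$ and $\mathrm{Im}\,P(i\omega)$ is an alternating sum involving only the odd-index coefficients, the corner polynomials $g_1,g_2,h_1,h_2$ of \eqref{eq:corner-polynomials} are precisely the members of $\cS$ that extremize these two quantities over the box (in particular $g_1(\cS,i\omega),g_2(\cS,i\omega)$ are real and $h_1(\cS,i\omega),h_2(\cS,i\omega)$ are purely imaginary) — this is exactly what \eqref{rectangle_inequalities} records. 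Hence $\mathrm{Re}\,g_1(\cS,i\omega)\le \mathrm{Re}\,P(i\omega)=0\le \mathrm{Re}\,g_2(\cS,i\omega)$ and $\mathrm{Im}\,h_1(\cS,i\omega)\le \mathrm{Im}\,P(i\omega)=0\le \mathrm{Im}\,h_2(\cS,i\omega)$, so both products $\mathrm{Re}\,g_1(\cS,i\omega)\cdot\mathrm{Re}\,g_2(\cS,i\omega)$ and $\mathrm{Im}\,h_1(\cS,i\omega)\cdot\mathrm{Im}\,h_2(\cS,i\omega)$ are $\le 0$, which places the pair in the forbidden set and contradicts the hypothesis. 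The borderline case $\omega=0$ is subsumed: there $\mathrm{Im}\,h_1(\cS,0)=\mathrm{Im}\,h_2(\cS,0)=0$, so the hypothesis forces $\mathrm{Re}\,g_1(\cS,0)\cdot\mathrm{Re}\,g_2(\cS,0)=a_0 b_0>0$, whence $0\notin I_0$ and $P(0)\ne0$ for all admissible constant terms.

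I expect the only genuinely non-routine point to be the assertion that $\nu^-$ is locally constant on the complement of the bad set of polynomials with imaginary-axis roots; I would spell this out via continuous dependence of roots on coefficients, using the boundedness of the box to confine all roots to a fixed compact set and then invoking Rouch\'e on a fixed vertical-strip contour, which is the standard ZEP argument already alluded to in the paper. Everything else is bookkeeping: checking the sign conventions so that the corner polynomials really bracket $\mathrm{Re}\,P(i\omega)$ and $\mathrm{Im}\,P(i\omega)$ as in \eqref{rectangle_inequalities}, and noting that ``$P_o$ is $q$-unstable'' means $\nu^-(P_o)=n-q$ with no root on the axis. In effect the lemma is a ``$q$-instability'' restatement of Kharitonov's reasoning, with the four products of corner polynomials playing the role of the four Kharitonov polynomials of Theorem \ref{thm:Kharitonov}.
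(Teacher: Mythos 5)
Your proposal is correct and follows essentially the same route as the paper: the rectangle inequalities \eqref{rectangle_inequalities} show that the hypothesis forces $P(i\omega)\neq 0$ for every $P\in\cS$ and every $\omega$ (your contradiction argument is just the contrapositive of the paper's implications \eqref{rectangular-condition}), and the Zero Exclusion Principle then transfers the $q$-instability of $P_o$ to all of $\cS$. The only difference is that you additionally spell out the continuity-of-roots/Rouch\'e argument behind the Zero Exclusion Principle, which the paper simply invokes as a known topological ingredient.
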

\medskip

The main topological ingredient of the proof of both statements is the so-called

\medskip
\noindent
\textbf{Zero Exclusion Principle}. If some polynomial $P_o\in\cS$ is $q$-unstable and for any $P\in\cS$ and any $\omega > 0$, $P(i\omega)\neq 0$, then the interval polynomial $\cS$ is $q$-unstable.

\medskip
\noindent
{\bf Proof of Lemma \ref{Interval_one_root}:}
As an immediate consequence of inequalities \eqref{rectangle_inequalities} one has that,  for any $P\in\cS$,
\begin{equation}\label{rectangular-condition}
\begin{aligned}
\text{Re}\,(g_1(\cS, i\omega))\cdot \text{Re}(g_2(\cS, i\omega)) &> 0  \implies \text{Re}\,P(i\omega)\neq0, \\
\text{Im}\,(h_1(\cS, i\omega))\cdot \text{Im}(h_2(\cS, i\omega)) &> 0 \implies \text{Im}\,P(i\omega)\neq0.
\end{aligned}
\end{equation}
The result then follows from the Zero Exclusion Principle. \hfill $\Box$

\medskip

\subsection{Interval polynomials and Descartes' Criterion}

Recall the following classical result.

\medskip
\noindent
\textbf{Descartes' criterion}.
{\it If the terms of a single-variable polynomial with real coefficients are ordered by descending variable exponent, then the number of positive roots of the polynomial is less than or equal to the number of sign differences between consecutive nonzero coefficients.}

\medskip
\noindent
As an immediate consequence, we have


\begin{proposition}\label{prop:Decartes-imaginary}
Given a polynomial $P$ with real coefficients, assume that there exist polynomials $Q$ and $R$ such that the coefficients of the polynomial
\begin{equation}\label{eq:Decart-verif}
S(P,Q,R)(\omega) =  Q(\omega){\rm Re}(P(i\omega)) + R(\omega){\rm Im}(P(i\omega))
\end{equation}
have at most one sign change.
Then, $P$ may have at most one pair of purely imaginary roots.
\end{proposition}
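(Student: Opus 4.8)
The plan is to reduce the statement to a direct application of Descartes' criterion to the polynomial $S(P,Q,R)$. The starting point is the elementary equivalence: for $\omega\in\mathbb R$, the number $i\omega$ is a root of $P$ if and only if ${\rm Re}\,(P(i\omega))=0$ \emph{and} ${\rm Im}\,(P(i\omega))=0$. Before using this, I would first record that, writing $P(\lambda)=\sum_k a_k\lambda^k$ with real coefficients $a_k$, the maps $\omega\mapsto{\rm Re}\,(P(i\omega))$ and $\omega\mapsto{\rm Im}\,(P(i\omega))$ are themselves polynomials in $\omega$ with real coefficients (obtained, up to signs, from the even- and odd-degree parts of $P$). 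Consequently $S(P,Q,R)(\omega)=Q(\omega)\,{\rm Re}\,(P(i\omega))+R(\omega)\,{\rm Im}\,(P(i\omega))$ is a polynomial in $\omega$ with real coefficients, so that Descartes' criterion applies to it.

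Next I would argue by contradiction. Since $P$ has real coefficients, its purely imaginary roots occur in conjugate pairs $\pm i\omega$ with $\omega>0$; suppose $P$ had two distinct such pairs, say $\pm i\omega_1$ and $\pm i\omega_2$ with $0<\omega_1<\omega_2$. For $j=1,2$ the equivalence above gives ${\rm Re}\,(P(i\omega_j))={\rm Im}\,(P(i\omega_j))=0$, whence $S(P,Q,R)(\omega_j)=Q(\omega_j)\cdot 0+R(\omega_j)\cdot 0=0$. Thus $\omega_1$ and $\omega_2$ are two distinct positive roots of $S(P,Q,R)$.

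Finally I would invoke Descartes' criterion, which bounds the number of positive roots of $S(P,Q,R)$ by the number of sign changes among its consecutive nonzero coefficients, and this is at most one by hypothesis; this contradicts the existence of the two distinct positive roots $\omega_1\neq\omega_2$. Hence $P$ has at most one pair of purely imaginary roots. (One tacitly assumes here that $S(P,Q,R)$ is not the identically zero polynomial — otherwise the hypothesis conveys no information — which is the only case of interest.) I do not anticipate a genuine obstacle: the two places that merit a line of care are the verification that ${\rm Re}\,(P(i\omega))$ and ${\rm Im}\,(P(i\omega))$ are honest real polynomials in the real variable $\omega$, so that Descartes' criterion genuinely applies, and the small bookkeeping point that purely imaginary roots come in $\pm$ pairs, which is what lets one pass from ``at most one positive root of $S(P,Q,R)$'' to ``at most one pair of purely imaginary roots of $P$''.
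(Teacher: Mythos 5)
Your proposal is correct and follows essentially the same route as the paper: the paper's own (one-line) proof observes that if $i\omega$ is a purely imaginary root of $P$ with $\omega>0$, then $\omega$ is a positive root of $S(P,Q,R)$, and then invokes Descartes' criterion to bound the number of such $\omega$ by the number of sign changes, i.e.\ by one. Your additional remarks (that ${\rm Re}\,(P(i\omega))$ and ${\rm Im}\,(P(i\omega))$ are real polynomials in $\omega$, and the caveat about $S$ not being identically zero) are sound but do not change the argument.
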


\noindent
Indeed, for $\omega > 0$,  if $i\omega$ is a root of  $P$, then $\omega$ is a (positive) root of $S(P,Q,R)$.

\medskip
In what follows, we use an interval polynomial variant of Proposition \ref{prop:Decartes-imaginary}. For the precise formulation, we need the following definition.
Given an interval polynomial $\cS$,
we say that the coefficients of $\cS$ have at most one sign change if, for some $j$, either $[a_k,b_k]\subset (-\infty,0]$  for all $k < j$ and  $[a_k,b_k]\subset [0,\infty)$ for all $k > j$,
or $[a_k,b_k]\subset (-\infty,0]$  for all $k > j$ and  $[a_k,b_k]\subset [0,\infty)$ for all $k < j$.
Notice that if the coefficients of $\cS$ have at most one sign change then the coefficients of any polynomial $P \in \cS$ have  at most one sign change.

Set
$$
\cT(\cS,Q,R)(\omega)=Q(\omega){\rm Re}(\cS(i\omega)) + R(\omega){\rm Im}(\cS(i\omega)).
$$
\begin{lemma} \label{lem:interval-descartes} Assume that there exist $Q, R$ such that the coefficients of $\cT(\cS,Q,R)$ have at most one sign change. Then, any polynomial $P \in \cS$ has at most one pair of purely imaginary roots.
\end{lemma}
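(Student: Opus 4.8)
The plan is to reduce Lemma \ref{lem:interval-descartes} to Proposition \ref{prop:Decartes-imaginary} applied to each individual member $P\in\cS$, so that the only genuine work is to verify that $\cT(\cS,Q,R)$ is a bona fide interval polynomial and that it contains $S(P,Q,R)$ for every $P\in\cS$.

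First I would observe that $\omega\mapsto{\rm Re}(\cS(i\omega))$ and $\omega\mapsto{\rm Im}(\cS(i\omega))$ are themselves interval polynomials in the real variable $\omega$. Writing $\cS=\sum_{k}I_k\lambda^k$ and using $i^{2k}=(-1)^k$, $i^{2k+1}=(-1)^k i$, one gets ${\rm Re}(\cS(i\omega))=\sum_k(-1)^kI_{2k}\,\omega^{2k}$ and ${\rm Im}(\cS(i\omega))=\sum_k(-1)^kI_{2k+1}\,\omega^{2k+1}$, where $(-1)^k[a,b]$ is read as $[a,b]$ or $[-b,-a]$ according to the parity of $k$ — in either case a genuine interval. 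Consequently, using the operations $\cS_1+\cS_2$ and $\cS_1\cdot\cS_2$ on interval polynomials introduced in Section~\ref{sec:prelim} (with the ordinary polynomials $Q,R$ viewed as degenerate interval polynomials), the expression $\cT(\cS,Q,R)(\omega)=Q(\omega){\rm Re}(\cS(i\omega))+R(\omega){\rm Im}(\cS(i\omega))$ is a well-defined interval polynomial in $\omega$. I would note here that multiplying an interval polynomial by a fixed polynomial and then adding may enlarge the coefficient intervals beyond the exact pointwise image; this is harmless, since all we need in the next step is the inclusion, not equality.

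Next, fix $P=\sum_k a_k\lambda^k\in\cS$, so $a_k\in I_k$ for all $k$. Then ${\rm Re}(P(i\omega))\in{\rm Re}(\cS(i\omega))$ and ${\rm Im}(P(i\omega))\in{\rm Im}(\cS(i\omega))$ as members of the respective interval polynomials, whence $S(P,Q,R)(\omega)=Q(\omega){\rm Re}(P(i\omega))+R(\omega){\rm Im}(P(i\omega))$ belongs to $\cT(\cS,Q,R)$. By hypothesis the coefficients of $\cT(\cS,Q,R)$ have at most one sign change and, as observed in the paragraph preceding the lemma, this property passes to every member of the interval polynomial; hence the coefficients of $S(P,Q,R)$ have at most one sign change. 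Proposition \ref{prop:Decartes-imaginary}, applied to this $P$ with the given $Q,R$, then gives that $P$ has at most one pair of purely imaginary roots. Since $P\in\cS$ was arbitrary, the lemma follows.

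The main obstacle is precisely the bookkeeping flagged in the second paragraph: confirming that $\cT(\cS,Q,R)$ really is an interval polynomial and that $S(P,Q,R)$ lies inside it. The parity-dependent reversal of intervals when taking real and imaginary parts of $\cS(i\omega)$, together with the fact that the product $Q\cdot\cS'$ of a polynomial and an interval polynomial is only \emph{contained in} (not equal to) the interval polynomial produced by interval arithmetic, are the two points that require explicit — though entirely routine — justification. Once the inclusion $S(P,Q,R)\in\cT(\cS,Q,R)$ is in hand, the remainder is an immediate combination of the descent of the ``at most one sign change'' property with Proposition \ref{prop:Decartes-imaginary}.
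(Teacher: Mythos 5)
Your proof is correct and follows essentially the same route as the paper: the key steps in both are the containment $S(P,Q,R)\in\cT(\cS,Q,R)$, the observation that the ``at most one sign change'' property of an interval polynomial passes to each of its members, and Descartes' criterion (which you invoke via Proposition \ref{prop:Decartes-imaginary}, while the paper runs the same argument by contradiction). Your extra bookkeeping on the interval-arithmetic inclusion is a harmless elaboration of what the paper leaves implicit.
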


\begin{proof}
Suppose, for the contrary, that some $P\in \cS$ has more than one pair of purely imaginary roots. By \eqref{eq:Decart-verif},
$S(P,Q,R)(\omega) \in \cT(\cS,Q,R).$
Therefore, $S(P,Q,R)(\omega)$ has at least two distinct positive real roots. Hence, by Descartes' criterion, the coefficients of $S(P,Q,R)(\omega)$ have more than one sign change, which is a contradiction.
\end{proof}

\section{Main results}\label{sec:formulations}

\subsection{Abstract result}

Set  $V= \bbR^d$ and assume that $f: [\alpha_-,\alpha_+] \times V \to V$ is a map satisfying the following properties:

\medskip

\noi\textbf{(P0)} $f$ is continuous;

\medskip
\noi\textbf{(P1)} The Jacobi matrix $D_xf(\alpha,0)$ exists for all $\alpha$,  depends continuously on $\alpha$ and
\begin{equation}\label{eq:ref-uniform-converge}
\lim_{\|x\| \to 0} \sup_{\alpha}{\|f(\alpha,x) - D_xf(\alpha,0)x \|\over \|x\|} = 0;
\end{equation}

\medskip
\noi\textbf{(P2)} $f(\alpha,0) =0$ for all $\alpha$;

\medskip
\noi\textbf{(P3)} $\det(D_xf(\alpha,0))\neq 0$ for all $\alpha$.

\medskip
To formulate the next condition, take the map $\Lambda:[\alpha_-,\alpha_+] \times \bbR\times\bbR \to \bbC$ determined by the characteristic polynomial of the Jacobi matrix $D_xf(\alpha,0)$, i.e.
\begin{equation}\label{def:Lambda}
\Lambda (\alpha, \beta, \tau) = \text{det}_{\mathbb C}((\tau + i\beta)\text{Id} - D_xf(\alpha,0)).
\end{equation}
 Define a $\bbZ_2$-action on $\mathbb R^3$ by
$$
(\alpha,\beta,\tau)\mapsto (\alpha,-\beta,\tau).
$$
Also, given a set $\mathcal P \subset [\alpha_-, \alpha_+] \times \bbR \times \bbR_+$,
define
\begin{equation}\label{eq:P-0-set}
\mathcal{P}_{\pm} = \mathcal{P}\bigcap \left(\{\alpha_\pm\} \times \bbR \times \bbR_+\right) \quad {\rm and} \quad
\cP_0 = \mathcal{P}\bigcap \left([\alpha_-,\alpha_+] \times \bbR \times \{0\}\right),
\end{equation}
where $\bbR_+$ denotes the non-negative semi-axis. We will denote by $\partial \Omega$ the boundary of a domain ${\Omega}$
and by $\overline{\Omega}$ the closure of ${\Omega}$.

\medskip
\noi\textbf{(P4)} There exists a bounded $\bbZ_2$-invariant domain $\mathcal{P} \subset [\alpha_-, \alpha_+] \times \bbR \times \bbR_+$ such that:

(i) $\mathcal{P}$  is homeomorphic to a closed ball;

\smallskip

(ii) $\Lambda(\alpha,\beta, \tau) \neq 0$ for all $(\alpha,\beta, \tau) \in \partial\mathcal{P}\setminus (\mathcal{P}_+ \bigcup \mathcal{P}_- \bigcup \cP_0)$;

\smallskip

(iii) $\mathcal{P}_+$ and $\mathcal{P}_-$ contain a different number of roots of $\Lambda(\alpha,\beta,\tau)$ (counted according to their multiplicities).


\medskip
\noi\textbf{(P5)}
There exists a finite collection of disjoint sets $\cD_k \subset [\alpha_-,\alpha_+] \times \bbR_+$ such that:

(i) each $\cD_k$ is homeomorphic to a closed disk;

\smallskip

(ii) $\Lambda^{-1}(0)\bigcap (\bigcup \cD_k\times\{0\})= \Lambda^{-1}(0)\cap \cP_o$;

\smallskip

(iii) for any $l\in \bbN$ and for any $(\alpha,\beta)\in \partial\cD_k$, $\Lambda (\alpha, l \beta, 0) \neq 0$.

\medskip

\noindent

\begin{remark}\label{rem:crossing-numbers} {\rm Conditions \textbf{(P0)} and \textbf{(P1)} reflect the minimal regularity that
we require from system \eqref{eq:sys-HB}. Condition \textbf{(P2)} guarantees the existence of a branch of zero equilibria from which we expect the occurrence of the Hopf bifurcation, while \textbf{(P3)} excludes steady-state bifurcation.

The domain $\cP$ provided by \textbf{(P4)} acts as a ``trap'' catching the roots of $\Lambda$, which may potentially contribute to the Hopf bifurcation. Condition \textbf{(P4)}(ii) guarantees that the roots may only escape $\cP$ through the planes $\{\alpha = \alpha_-\},\; \{\alpha = \alpha_+\}$ and $\{\tau = 0\}$. Condition \textbf{(P4)}(iii) is an analog of the standard non-zero crossing number assumption.

 On the other hand, the sets $\cD_k$ provided by \textbf{(P5)} form the domain on which we will compute the topological invariant. Property \textbf{(P5)}(iii) (which is a kind of non-resonance condition) ensures that the topological invariant is well-defined, while \textbf{(P5)}(ii) (which says that all the roots in $\cD_k$ are precisely those ``exiting'' $\cP$) ensures that the invariant is non-trivial and thus that the Hopf bifurcation takes place.  Several versions of conditions \textbf{(P4)} and \textbf{(P5)} directly related to the classical setting for the Hopf bifurcation are discussed in the next subsection.

}
\end{remark}
The following statement is our main abstract result.
\begin{theorem}
\label{thm:main-theorem}
Let $f$ satisfy conditions \textbf{(P0) - (P5)}.
Then, there exists a branch of non-constant periodic solutions to system  \eqref{eq:sys-HB} bifurcating  from the trivial solution (cf.~Definition \ref{YTM}).
\end{theorem}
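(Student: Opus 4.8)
The plan is to translate the Hopf bifurcation problem into a fixed-point problem for an $S^1$-equivariant map on a suitable infinite-dimensional space, compute its $S^1$-degree with one free parameter (the bifurcation parameter $\alpha$), and show this degree is nontrivial, which forces the existence of a connected branch of nonconstant periodic solutions via a Rabinowitz-type continuation argument. This follows the strategy of the proofs of Theorems~9.18 and~9.24 in \cite{AED}.

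First I would set up the functional-analytic framework. Rescaling time $t \mapsto t/p$ (equivalently, treating the period, or frequency, as an additional variable) turns $p$-periodic solutions of \eqref{eq:sys-HB} into $2\pi$-periodic solutions of a rescaled system. I would work in the Sobolev space $H^1_{2\pi}(\bbR, V)$ of $2\pi$-periodic $V$-valued functions, on which $S^1$ acts by time-shift; the circle action decomposes the space into isotypical components indexed by the Fourier modes $l \in \bbN$. Using \textbf{(P2)}, the equilibrium $x=0$ gives the trivial solution set; using \textbf{(P0)} and \textbf{(P1)}, the right-hand side defines a completely continuous perturbation of a Fredholm operator, so a Leray--Schauder-type $S^1$-equivariant degree is well defined on appropriately chosen open invariant sets avoiding the trivial solutions. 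The crucial linearized operator at $(\alpha,0)$ is invertible on the $l$-th Fourier mode precisely when $\Lambda(\alpha, l\beta_0, 0) \ne 0$ where the frequency corresponds to period $2\pi/l$; this is exactly where conditions \textbf{(P4)} and \textbf{(P5)} enter.

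Next I would carry out the degree computation. Using \textbf{(P4)}, the domain $\cP$ confines the purely imaginary (more precisely, the potentially bifurcating) roots of $\Lambda$ so that they can exit only through $\{\alpha=\alpha_\pm\}$ or $\{\tau=0\}$, and \textbf{(P4)}(iii) is the nonzero-crossing-number condition guaranteeing that the $S^1$-degree changes across the parameter interval. The sets $\cD_k$ from \textbf{(P5)} carve out the region in $(\alpha,\beta)$-space carrying the relevant eigenvalue crossings; \textbf{(P5)}(iii) ensures that on the boundary $\partial\cD_k$, the linearization is nondegenerate on every Fourier mode $l$, so that the degree is well defined there, while \textbf{(P5)}(ii) identifies these crossings with exactly the roots leaving $\cP$, so the computed degree is nonzero. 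Concretely, one expresses the $S^1$-degree as a sum over the $\cD_k$ and over Fourier modes of local contributions determined by the sign of (a suitable determinant built from) $\Lambda$, and \textbf{(P4)}(iii) together with \textbf{(P5)} shows this sum does not cancel.

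Finally, from nontriviality of the $S^1$-equivariant degree with one free parameter, the standard continuation (Kuratowski connectedness / Rabinowitz alternative in the equivariant setting, as in \cite{AED}) yields a compact connected set of nonconstant $2\pi$-periodic solutions of the rescaled system whose closure meets the trivial branch; undoing the rescaling and recording the period in $[p_-,p_+]$ gives precisely the branch $\Gamma$ of Definition~\ref{YTM}. I expect the main obstacle to be the degree computation itself: carefully relating the global topological data encoded in \textbf{(P4)} (a ``trap'' in $(\alpha,\beta,\tau)$-space catching roots of $\Lambda$) and \textbf{(P5)} (the disks $\cD_k$ on which the invariant is computed) to the sum of local $S^1$-degree contributions over all Fourier modes, and verifying that the homotopy invariance and excision properties of the $S^1$-degree legitimately allow one to deform to a product situation where the contributions are explicitly computable and visibly nonzero. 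The non-resonance condition \textbf{(P5)}(iii) and the interplay with \textbf{(P3)} (ruling out the mode $l=0$, i.e.\ steady-state bifurcation) are the delicate technical points to handle with care.
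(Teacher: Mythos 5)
Your plan is essentially the paper's own proof: rescale the period, reformulate as an $S^1$-equivariant compact fixed-point problem on $H^1(S^1;V)$, show via the properties of $\Lambda$ encoded in \textbf{(P4)} and \textbf{(P5)} that the $S^1$-degree with one free parameter is nonzero (reducing through the isotypical Fourier decomposition to a Brouwer-degree crossing-number count $(\mathfrak t_--\mathfrak t_+)/2\neq 0$ on $\bigcup\cD_k$, with \textbf{(P3)} controlling the zero mode), and conclude with the Kuratowski connectedness argument as in Theorems~9.18 and~9.24 of \cite{AED}. You correctly identify the role of each hypothesis and the delicate points (non-resonance on $\partial\cD_k$, the product/suspension reduction), so no substantive deviation from the paper's argument remains to report.
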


\subsection{Corollaries}
Let us consider some corollaries of Theorem \ref{thm:main-theorem} based on variations of conditions {\bf (P4)} and {\bf (P5)} which are more
relaxed but easier to verify. To this end, we introduce the following notation:
\begin{equation}\label{eq:def_R_S}
\begin{aligned}
R(f)&=\{(\alpha,\beta)\in [\alpha_-,\alpha_+]\times\bbR: \Lambda(\alpha,\beta,0)=0\},\\
S_j(f) &= \{(\alpha,\beta) \in [\alpha_-,\alpha_+]\times\bbR: (\alpha,j\beta)\in R(f)\}, \  j=2,3,\ldots,\\
S(f)&= \bigcup^\infty_{j=2} S_j(f).
\end{aligned}
\end{equation}
\begin{remark}{\rm
Notice that $R(f)$  is the set of purely imaginary characteristic roots lying between $\alpha_-$ and $\alpha_+$, while $S(f)$ is the set of points an integer multiple of which lies in $R(f)$.
}
\end{remark}

We use a few variants of conditions {\bf (P4)} and {\bf (P5)}.

\medskip

\noi\textbf{(P4$^\prime$)}
{\it There exist $\alpha_-,\alpha_+$, for which  $x=0$ is a  hyperbolic equilibrium of \eqref{eq:sys-HB} and the dimension of the unstable manifold of the linearization of \eqref{eq:sys-HB}  at $0$ is different for $\alpha_-$ and $\alpha_+$.}

\medskip

\noi\textbf{(P5$^\prime$)}  There exists a finite collection of disjoint sets $\cD_k \subset [\alpha_-,\alpha_+]\times \bbR_+$ such that:

(i) each $\cD_k$ is homeomorphic to a closed disk;

\smallskip

(ii) $R(f) \subset \bigcup \cD_k$;

\smallskip

(iii) $S(f) \bigcap \partial\cD_k = \emptyset$ for any $k$.

\medskip

\noi\textbf{(P5$^{\prime\prime}$)} $R(f)\cap S(f)=\emptyset$.

\medskip

\noi\textbf{(P5$^{\prime\prime\prime}$)} $D_xf(\alpha,0)$ has at most one pair of purely imaginary eigenvalues for all $\alpha \in [\alpha_-,\alpha_+]$.

\medskip

\noi\textbf{(P5$^{\prime\prime\prime\prime}$)} There exists a unique $\alpha \in (\alpha_-,\alpha_+)$ such that
$D_xf(\alpha,0)$ has purely imaginary eigenvalues.

\begin{remark}\label{rem:verification-conditions-P}
{\rm  Observe that \textbf{(P4$^\prime$)} is a non-zero crossing number condition; in particular, the classical Routh-Hurwitz criterion (see, for example, \cite{RahSchm}) can be useful for its verification. Condition \textbf{(P5$^{\prime}$)} is a slight modification of \textbf{(P5)}, adjusted to the case when \textbf{(P4$^\prime$)} holds. Condition \textbf{(P5$^{\prime\prime}$)} is the classical non-resonance condition. Condition \textbf{(P5$^{\prime\prime\prime}$)}, although much more restrictive than condition \textbf{(P5$^{\prime\prime}$)}, can be verified using Descartes' criterion (see also Proposition \ref{prop:Decartes-imaginary}). Finally,  \textbf{(P5$^{\prime\prime\prime\prime}$)} is the standard isolated center condition (see, for example, \cite{AED}). }
\end{remark}

The following statement is based on Theorem 3.2 and is used below to obtain sufficient conditions for the Hopf bifurcation in interval systems.

\begin{theorem}\label{thm:corollaries-all}
Suppose $f$ satisfies conditions \textbf{(P0) - (P3)}. Suppose, in addition, $f$ satisfies one of the following assumptions:

\smallskip
(a) \textbf{(P4$^\prime$)} and \textbf{(P5$^\prime$)};

\smallskip

(b) \textbf{(P4$^\prime$)} and \textbf{(P5$^{\prime\prime}$)};

\smallskip

(c) \textbf{(P4$^\prime$)} and \textbf{(P5$^{\prime\prime\prime}$)};

\smallskip
(d) \textbf{(P4)} and \textbf{(P5$^{\prime\prime}$)};

\smallskip
(e) \textbf{(P4)} and \textbf{(P5$^{\prime\prime\prime\prime}$)}.

\smallskip

\noindent
Then, system  \eqref{eq:sys-HB} has a branch of non-constant periodic solutions bifurcating  from the trivial one.
\end{theorem}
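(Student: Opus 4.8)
The plan is to deduce Theorem \ref{thm:corollaries-all} from Theorem \ref{thm:main-theorem} by showing that each of the hypothesis packages (a)--(e) implies conditions \textbf{(P4)} and \textbf{(P5)}. Since \textbf{(P0)--(P3)} are already assumed, the work is entirely in constructing the ``trap'' domain $\mathcal P$ of \textbf{(P4)} and the collection $\{\mathcal D_k\}$ of \textbf{(P5)} from the more easily verified substitutes.

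First I would dispose of the \textbf{(P5)}-side implications, since they are essentially bookkeeping with the set $R(f)$ of purely imaginary characteristic roots. Assume \textbf{(P5$^{\prime\prime\prime\prime}$)} $\Rightarrow$ \textbf{(P5$^{\prime\prime\prime}$)} $\Rightarrow$ \textbf{(P5$^{\prime\prime}$)} $\Rightarrow$ \textbf{(P5$^\prime$)}: the first two are immediate ($\mathcal P$ compact forces $R(f)$ finite in the first case), and \textbf{(P5$^{\prime\prime}$)} $\Rightarrow$ \textbf{(P5$^\prime$)} requires covering the compact set $R(f)$ by finitely many small disjoint closed disks $\mathcal D_k$ whose boundaries avoid the closed set $S(f)$; this is possible precisely because $R(f)\cap S(f)=\emptyset$ and both $R(f)$ and (after intersecting with the bounded band $\beta\le\max_{\alpha}\|D_xf(\alpha,0)\|$) $S(f)$ are compact, so they sit at positive distance and one picks disk radii below that distance. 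Finally \textbf{(P5$^\prime$)} $\Rightarrow$ \textbf{(P5)}: the $\mathcal D_k$ supplied by \textbf{(P5$^\prime$)} work directly, with \textbf{(P5)}(ii) following from $R(f)\subset\bigcup\mathcal D_k$ and \textbf{(P5)}(iii) from $S(f)\cap\partial\mathcal D_k=\emptyset$ — noting that $\Lambda(\alpha,l\beta,0)=0$ on $\partial\mathcal D_k$ for some $l\ge1$ would force $(\alpha,\beta)\in R(f)\subset\bigcup\mathcal D_j$ if $l=1$ (contradicting the boundaries being mutually disjoint, after shrinking) or $(\alpha,\beta)\in S(f)$ if $l\ge2$.

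Next I would handle the \textbf{(P4)}-side. For cases (d) and (e) there is nothing to do. For cases (a)--(c), which assume the crossing-number condition \textbf{(P4$^\prime$)}, I must manufacture the domain $\mathcal P$. The natural choice is a large $\bbZ_2$-invariant box $\mathcal P = [\alpha_-,\alpha_+]\times[-M,M]\times[0,M]$ with $M$ chosen larger than the modulus of every characteristic root over $\alpha\in[\alpha_-,\alpha_+]$ (possible by continuity and compactness, using \textbf{(P1)}); a box is homeomorphic to a closed ball, giving (i). For (ii), on $\partial\mathcal P$ away from $\mathcal P_\pm\cup\mathcal P_0$ we are on the faces $\{\beta=\pm M\}$ or $\{\tau=M\}$, where $|\tau+i\beta|\ge M$ exceeds every root modulus, so $\Lambda\neq0$. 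For (iii): $\mathcal P_+$ (resp. $\mathcal P_-$) contains exactly the roots of $\Lambda(\alpha_+,\cdot,\cdot)$ (resp. $\Lambda(\alpha_-,\cdot,\cdot)$) with $\tau\ge0$, i.e. the eigenvalues of $D_xf(\alpha_\pm,0)$ in the closed right half-plane; by hyperbolicity in \textbf{(P4$^\prime$)} none lie on $\{\tau=0\}$, so these are the eigenvalues with positive real part, whose count equals the dimension of the unstable manifold — and \textbf{(P4$^\prime$)} says these two counts differ. One must be slightly careful that roots on $\{\beta=0\}\cap\{\tau=0\}$, i.e. zero eigenvalues, are excluded by \textbf{(P3)}, so $\mathcal P_\pm$ does not accidentally pick up a real root on its lower edge in an ambiguous way.

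The main obstacle I expect is the last-mentioned subtlety in case (a)--(c): reconciling the box-trap $\mathcal P$ with the separately-chosen disks $\mathcal D_k$ so that \textbf{(P5)}(ii), which reads $\Lambda^{-1}(0)\cap(\bigcup\mathcal D_k\times\{0\}) = \Lambda^{-1}(0)\cap\mathcal P_o$, actually holds. Here $\mathcal P_o = \mathcal P\cap([\alpha_-,\alpha_+]\times\bbR\times\{0\})$ is the full bottom face of the box, whose zero set of $\Lambda$ is exactly $R(f)$ (restricted to $\beta\ge0$, but $\mathcal D_k\subset[\alpha_-,\alpha_+]\times\bbR_+$ and $R(f)$ is symmetric, so after also requiring $\Lambda(\alpha,0,0)\ne0$ from \textbf{(P3)} to kill the $\beta=0$ slice, the identification $\bigcup\mathcal D_k\supset R(f)\cap\{\beta>0\}$ from \textbf{(P5$^\prime$)}(ii) does the job). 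So the real content is checking that the disks from the \textbf{(P5)}-argument and the box from the \textbf{(P4)}-argument are mutually compatible, which amounts to choosing $M$ large enough that $\bigcup\mathcal D_k\subset[\alpha_-,\alpha_+]\times[0,M)$ — again available by compactness. Once these fit together, Theorem \ref{thm:main-theorem} applies verbatim and yields the branch.
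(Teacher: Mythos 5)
Your reduction of (a)--(c) to Theorem \ref{thm:main-theorem} is essentially the paper's argument (your full-box trap $[\alpha_-,\alpha_+]\times[-M,M]\times[0,M]$ is an acceptable, even simpler, variant of the paper's collar construction $\cP=\bigl(B\cap\{\tau\ge\delta\}\bigr)\cup\bigcup\cD_k\times\{0\le\tau\le\delta\}$), but the proposal has a genuine gap in case (d). Condition \textbf{(P5)}(ii) is not free-standing: it demands $\Lambda^{-1}(0)\cap(\bigcup\cD_k\times\{0\})=\Lambda^{-1}(0)\cap\cP_0$ for the \emph{given} trap $\cP$, and in case (d) that trap is whatever \textbf{(P4)} supplies, whose bottom face $\cP_0$ may contain only part of $R(f)$. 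Your chain \textbf{(P5$^{\prime\prime}$)}$\Rightarrow$\textbf{(P5$^{\prime}$)}$\Rightarrow$\textbf{(P5)} produces disks covering \emph{all} of $R(f)$, so the left-hand side of (ii) can strictly exceed the right-hand side; this is not cosmetic, since the identity $\tfrac12\deg(\Lambda,\cP_0)=\deg(\Lambda,\bigcup\cD_k)$ used in the proof of Theorem \ref{thm:main-theorem} then fails and the extra roots can cancel the degree. Your ``reconciliation'' paragraph only treats the situation where $\cP$ is the big box you built yourself, i.e. cases (a)--(c), where $\cP_0$ is the whole bottom band and the issue is trivial. The paper's proof of (d) is exactly the missing work: it first modifies $\cP$ to a trap $\cP^{\prime}$ whose bottom is a finite disjoint union of disks (a surgery argument in the spirit of Alexander's theorem), and then takes the $\cD_k$ to be hole-filled neighborhoods of $R\cap\cP^{\prime}_0$ \emph{inside} $\cP^{\prime}_0$ avoiding $S$, which forces $R\cap\bigcup\cD_k=R\cap\cP^{\prime}_0$ and hence \textbf{(P5)}(ii). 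Nothing in your proposal plays this role.

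Case (e) is also broken as proposed: the implication \textbf{(P5$^{\prime\prime\prime\prime}$)}$\Rightarrow$\textbf{(P5$^{\prime\prime\prime}$)} is false. Uniqueness of the critical value $\alpha_0$ says nothing about how many pairs of purely imaginary eigenvalues $D_xf(\alpha_0,0)$ has; (e) is precisely the isolated-center setting of Ize and Chow--Mallet-Paret--Yorke (cf.~Remark \ref{rem:comparison}), which is meant to allow multiple and resonant imaginary eigenvalues, e.g. $i$ and $2i$ at $\alpha_0$, in which case \textbf{(P5$^{\prime\prime\prime}$)} and even \textbf{(P5$^{\prime\prime}$)} fail, so your chain collapses; one must instead construct the $\cD_k$ directly (choosing their boundaries at levels avoiding the thin set $S$), again relative to $\cP_0$ as in (d). A smaller slip: $S(f)$ intersected with a bounded band is \emph{not} compact --- it is an infinite union of closed sets $S_j(f)$ accumulating on $\{\beta=0\}$; the positive distance to $R(f)$ survives only because \textbf{(P3)} and the compactness of $R(f)$ keep $R(f)$ away from $\{\beta<\varepsilon\}$, and the tail $\bigcup_{j\ge m}S_j(f)$ must be handled separately, which is exactly what the paper's proof of (b) does.
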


\begin{remark}\label{rem:comparison}
{\rm Under the assumption that $f$ is of class $C^{1,1}$, Theorem  \ref{thm:corollaries-all}(e) was established in \cite{Ize-1985} (see also \cite{Geba-Wacek,ChowMPYorke,AED,
AY}). On the other hand, by taking a sufficiently small neighborhood $(\alpha_-,\alpha_+)$, one can deduce the main result of \cite{KK} from
 Theorem \ref{thm:corollaries-all}(d) (without extra ``simplicity" assumptions on the corresponding eigenvalues).}
\end{remark}

\subsection{Theorem  \ref{thm:corollaries-all} and interval polynomials}
In this section, we address families of one-parameter systems for which every member is undergoing the Hopf bifurcation.
To be more precise, denote by  $\fA $ a map from $[\alpha_-,\alpha_+]$ to the set of interval matrices of size $d \times d$ and by $\mathfrak r$ a set of maps $r:[\alpha_-,\alpha_+]\times V \to V$.  By the symbol
\begin{equation}\label{interval_equation}
\dot{x}= \fA(\alpha)x + \mathfrak r(\alpha, x)
\end{equation}
we mean the family of all systems of the form
\begin{equation}\label{eq:selector}
\dot{x}= A(\alpha)x + r(\alpha, x)
\end{equation}
satisfying the following conditions:

\medskip
(i) $A: [\alpha_-,\alpha_+] \to L(d, \bbR)$ is continuous;

\medskip

(ii) $A(\alpha) \in \fA(\alpha)$ for every $\alpha \in \mathbb R$;

\medskip

(iii) $r \in \mathfrak r.$

 \medskip
 \noindent
 Denote by $\fQ$ the map from  $\mathbb R$ to the set of monic interval polynomials such that for any $\alpha \in \mathbb R$,
\begin{equation}\label{interval_characteristic equation}
\fQ(\alpha) = J_0(\alpha) + J_1(\alpha)x+\cdots+J_{n-1}(\alpha)x^{n-1}+x^n, \quad\quad\quad J_k(\alpha) = [a_k(\alpha),b_k(\alpha)],
\end{equation}
is the collection of all possible characteristic polynomials  corresponding to each member of the family $\fA(\alpha)$
(in fact, this collection constitutes an interval polynomial). To generalize Theorem \ref{thm:corollaries-all}(a,b,c) to the interval setting, we need ``interval analogs" of notations \eqref{eq:def_R_S}.
Given a family of systems \eqref{interval_equation} with interval characteristic equation \eqref{interval_characteristic equation}, put (cf.~\eqref{eq:corner-polynomials} and \eqref{rectangular-condition})

\begin{equation}\label{eq:interval-shadows}
\begin{aligned}
w_1(\alpha,\beta)&=\text{Re}(g_1(\fQ(\alpha), i\beta))\cdot\text{Re}(g_2(\fQ(\alpha), i\beta)),\\
w_2(\alpha,\beta)&=\text{Im}(h_1(\fQ(\alpha), i\beta))\cdot\text{Im}(h_2(\fQ(\alpha), i\beta)),\\
\fR(\fA)&=\{(\alpha,\beta): \alpha\in [\alpha_-,\alpha_+],w_1(\alpha,\beta)\leq0,w_2(\alpha,\beta)\leq0 \},\\
\fS_j(\fA) &= \{(\alpha,\beta) \in [\alpha_-,\alpha_+]\times\bbR: (\alpha,j\beta)\in \fR(\fA)\},\\
\fS(\fA)&= \bigcup^\infty_{j=2} \fS_j(\fA).
\end{aligned}
\end{equation}
Here $\fR$ is the set of all the purely imaginary zeros of all polynomials $P$ that belong to the family \eqref{interval_characteristic equation}.

We make the following assumptions.
\medskip

\noi\textbf{(R0)}  $r$ is continuous in both variables for any $r \in \mathfrak r$;

\medskip

\noi\textbf{(R1)} For any  $r \in \mathfrak r$, $$\lim_{\|x\| \to 0} \sup_{\alpha}{\| r(\alpha,x) \| \over \|x\|} = 0;$$

\medskip

\noi\textbf{(R2)} For any $\alpha \in [\alpha_-,\alpha_+]$,  $0\notin [a_0(\alpha),b_0(\alpha)]$;

\medskip

\noi\textbf{(R3)} $\fQ(\alpha_-)$ is $q_1$-unstable (cf.~Definition \ref{def-monic-k-stable} and \eqref{interval_equation}--\eqref{interval_characteristic equation});

\medskip

\noi\textbf{(R4)} $\fQ(\alpha_+)$ is $q_2$-unstable with $q_1\neq q_2$;

\medskip

\noi\textbf{(R5$^{\prime}$)} There exists a finite collection of disjoint sets $\cD_k \subset [\alpha_-,\alpha_+] \times \bbR_+$ such that:

(i) each $\cD_k$ is homeomorphic to a closed disk;

\smallskip

(ii) $\fR(\fA) \subset \bigcup \cD_k$;

\smallskip

(iii) $\fS(\fA) \bigcap \partial\cD_k = \emptyset$ for any $k$;

\medskip

\noi\textbf{(R5$^{\prime\prime}$)} $\fR(\fA)\cap \fS(\fA)=\emptyset$;

\medskip

\noi\textbf{(R5$^{\prime\prime\prime})$} For any  $\alpha \in [\alpha_-,\alpha_+]$ and for any  $P\in \fQ$, $P$ has at most one pair of purely imaginary roots.

We are now in a position to formulate our main result on the Hopf bifurcation in interval systems.

\begin{theorem}\label{theor:Haritonov2}
Suppose that \noi\textbf{(R0)-(R4)} hold and either \textbf{(R5$^\prime$)}, \textbf{(R5$^{\prime\prime}$)} or \textbf{(R5$^{\prime\prime\prime}$)} is satisfied.
Then, any selector \eqref{eq:selector} belonging to \eqref{interval_equation}  
has a branch of non-constant periodic solutions bifurcating  from the trivial solution.
\end{theorem}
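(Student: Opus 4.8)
The plan is to reduce Theorem~\ref{theor:Haritonov2} to Theorem~\ref{thm:corollaries-all} applied to each individual selector \eqref{eq:selector}. Fix a selector $\dot x = A(\alpha)x + r(\alpha,x)$ belonging to \eqref{interval_equation}, and set $f(\alpha,x) = A(\alpha)x + r(\alpha,x)$. I would first verify conditions \textbf{(P0)}--\textbf{(P3)}: \textbf{(P0)} follows from continuity of $A$ (condition (i) on selectors) and \textbf{(R0)}; \textbf{(P1)} holds because $D_xf(\alpha,0) = A(\alpha)$, which depends continuously on $\alpha$, and \eqref{eq:ref-uniform-converge} is exactly \textbf{(R1)}; \textbf{(P2)} follows since $r(\alpha,0)=0$ (a consequence of \textbf{(R1)}); and \textbf{(P3)} follows from \textbf{(R2)}, because $\det A(\alpha) = \pm a_0(\alpha)$ where $a_0(\alpha)$ is the constant term of the characteristic polynomial of $A(\alpha)$, a member of $\fQ(\alpha)$, and $0 \notin [a_0(\alpha),b_0(\alpha)]$ forces $a_0(\alpha) \neq 0$.

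The next step is to establish condition \textbf{(P4$^\prime$)}. Since $A(\alpha)$ has characteristic polynomial $P_\alpha \in \fQ(\alpha)$, and \textbf{(R3)} says $\fQ(\alpha_-)$ is $q_1$-unstable, every member of $\fQ(\alpha_-)$ — in particular $P_{\alpha_-}$ — has exactly $q_1$ roots with positive real part and $d-q_1$ roots with negative real part; no root is purely imaginary, so $x=0$ is a hyperbolic equilibrium at $\alpha_-$ with unstable manifold of dimension $q_1$. Likewise \textbf{(R4)} gives hyperbolicity at $\alpha_+$ with unstable dimension $q_2 \neq q_1$. Hence \textbf{(P4$^\prime$)} holds. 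This is the step where the $q$-(in)stability language of Definition~\ref{def-monic-k-stable} is translated into the dimension-of-unstable-manifold language of \textbf{(P4$^\prime$)}, and it is essentially immediate.

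It remains to check that \textbf{(R5$^\prime$)}, \textbf{(R5$^{\prime\prime}$)}, or \textbf{(R5$^{\prime\prime\prime}$)} implies the corresponding primed condition on $f$, so that one of the alternatives (a), (b), (c) of Theorem~\ref{thm:corollaries-all} applies. The key observation is that $R(f) \subset \fR(\fA)$: indeed, if $(\alpha,\beta) \in R(f)$ then $i\beta$ is a root of $P_\alpha \in \fQ(\alpha)$, so $\mathrm{Re}\,P_\alpha(i\beta) = 0$ and $\mathrm{Im}\,P_\alpha(i\beta) = 0$, and by \eqref{rectangle_inequalities} (with $\cS = \fQ(\alpha)$) this forces $\mathrm{Re}(g_1(\fQ(\alpha),i\beta))$ and $\mathrm{Re}(g_2(\fQ(\alpha),i\beta))$ to have opposite signs (or one vanishes) and similarly for the $h$'s, i.e. $w_1(\alpha,\beta)\le 0$ and $w_2(\alpha,\beta)\le 0$, so $(\alpha,\beta)\in\fR(\fA)$. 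Consequently $S_j(f)\subset\fS_j(\fA)$ for each $j$ and $S(f)\subset\fS(\fA)$. Therefore: if \textbf{(R5$^\prime$)} holds with sets $\cD_k$, then the same $\cD_k$ witness \textbf{(P5$^\prime$)} for $f$, since $R(f)\subset\fR(\fA)\subset\bigcup\cD_k$ and $S(f)\cap\partial\cD_k\subset\fS(\fA)\cap\partial\cD_k=\emptyset$; if \textbf{(R5$^{\prime\prime}$)} holds, then $R(f)\cap S(f)\subset\fR(\fA)\cap\fS(\fA)=\emptyset$, giving \textbf{(P5$^{\prime\prime}$)}; and if \textbf{(R5$^{\prime\prime\prime}$)} holds, then in particular $P_\alpha$ has at most one pair of purely imaginary roots for all $\alpha$, which is precisely \textbf{(P5$^{\prime\prime\prime}$)} for $f$ since $D_xf(\alpha,0)=A(\alpha)$ has characteristic polynomial $P_\alpha$. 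In each case the relevant hypothesis of Theorem~\ref{thm:corollaries-all}(a), (b) or (c) is met, yielding a branch of non-constant periodic solutions bifurcating from the trivial one for the selector. Since the selector was arbitrary, the theorem follows.

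The main obstacle — really the only non-bookkeeping point — is the inclusion $\fR(\fA) \supset \{\text{purely imaginary zeros of members of }\fQ\}$, i.e. making precise that the sign-product conditions $w_1,w_2\le 0$ genuinely capture all purely imaginary roots of all selectors. This is exactly the content built into Lemma~\ref{Interval_one_root} and the rectangle inequalities \eqref{rectangle_inequalities}, so no new work is needed; one just has to be careful that $\fR(\fA)$ as defined in \eqref{eq:interval-shadows} is a (possibly strict) \emph{superset} of the true root set — which is harmless, since all the \textbf{(R5)} conditions are monotone under enlarging $\fR(\fA)$, and enlarging the set only makes the non-resonance/covering requirements stronger, hence still sufficient.
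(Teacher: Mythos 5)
Your proposal is correct and follows essentially the same route as the paper: the paper's own proof is just the observation that \textbf{(R0)--(R4)} give \textbf{(P0)--(P3)} and \textbf{(P4$^\prime$)} for every selector, and that each \textbf{(R5)}-variant passes to the corresponding \textbf{(P5)}-variant, after which Theorem \ref{thm:corollaries-all}(a,b,c) applies. You simply spell out the details the paper leaves implicit (notably $\det A(\alpha)\neq 0$ from \textbf{(R2)}, the $q$-instability/hyperbolicity translation, and the inclusion $R(f)\subset\fR(\fA)$ via \eqref{rectangle_inequalities}), which is a faithful elaboration rather than a different argument.
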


\begin{remark}
{\rm One can easily see the parallelism between hypotheses of Theorem  \ref{theor:Haritonov2} and their non-interval counter-parts from Theorem \ref{thm:corollaries-all}(a,b,c).}
\end{remark}

\begin{remark}\label{rem:Decartes-interval-verification}
{\rm Conditions \textbf{(R3)} and \textbf{(R4)} can be verified using Kharitonov's theorem (see Theorem \ref{thm:Kharitonov}) and Lemma \ref{Interval_one_root}. To verify \textbf{(R5$^{\prime\prime\prime}$)}, one can use Lemma \ref{lem:interval-descartes}.}
\end{remark}

\section{Examples}\label{sec:examples}

Below we present three examples illustrating Theorem \ref{theor:Haritonov2} with one of the conditions
\textbf{(R5$^{\prime})$} -- \textbf{(R5$^{\prime\prime\prime})$} in each of them.
To simplify the exposition, we are dealing with higher order scalar equations
rather than with equivalent first order systems.
The class of nonlinearities $\mathfrak r$ in each example is assumed to satisfy
conditions \textbf{(R0)} and \textbf{(R1)}.

\begin{figure}[ht]
\begin{center}
\includegraphics*[width=0.35\columnwidth]{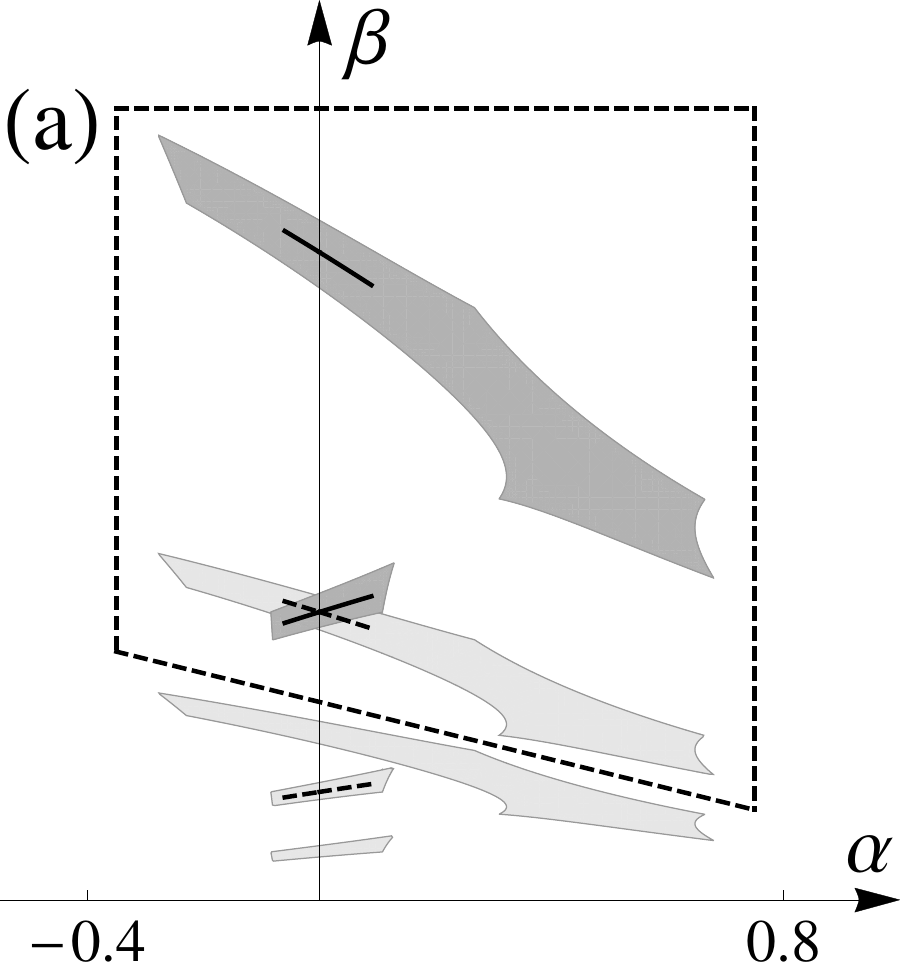} \qquad\quad \includegraphics*[width=0.33\columnwidth]{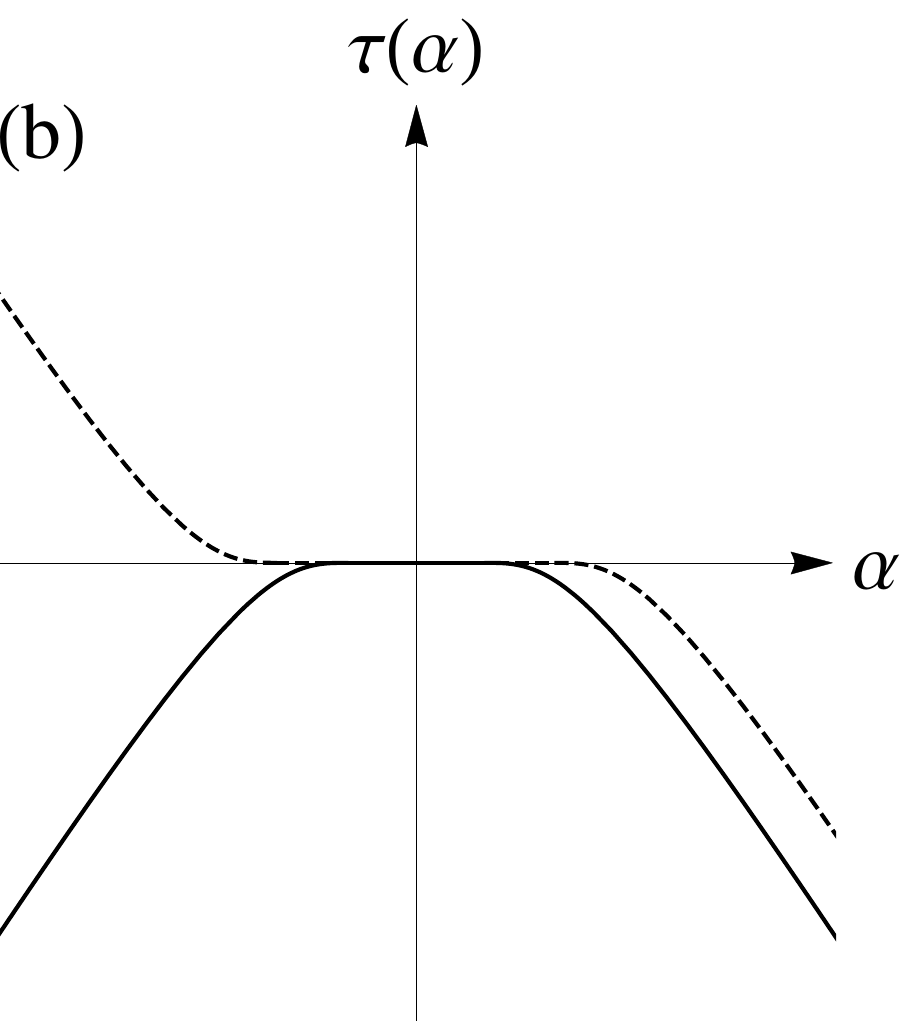}
\end{center}
\caption{
(a) The dark grey domain that consists of two connected components is the set $\fR$ 
of purely imaginary characteristic roots $i\beta$ of the interval polynomial  
(\ref{eq:fq}) for Example \ref{ex:interval1}.
The two solid black curves inside the two components of $\fR$ show the set of purely imaginary roots for a representative polynomial $P(\alpha)(\cdot)$ that belongs to the family (\ref{eq:fq}).
This representative has two purely imaginary roots $i\beta_1(\alpha)$, $i\beta_2(\alpha)$ for some interval of $\alpha$ values $[\alpha_1,\alpha_2]\subset (\alpha_-,\alpha_+)=(-0.4,0.8)$.
The light grey domains are the sets $\fS_2$ and $\fS_3$ obtained from the dark grey domain $\fR$ by the transformations $(\alpha,\beta) \mapsto (\alpha,\beta/2)$ and
$(\alpha,\beta) \mapsto (\alpha,\beta/3)$, respectively; the dashed curves inside $\fS_2$ are the images of the solid black curves in $\fR$ under this transformation.
The intersection of the solid curve and the dashed curve inside the smaller component of $\fR$ corresponds to the $2:1$ resonance $i\beta_1(\alpha)=2 i\beta_2(\alpha)$.
The dashed quadrangle $\cD_1$ contains the set $\fR$; its boundary does not intersect $\fS_i$ in accordance with \textbf{(R5$^\prime$)}.
(b) The real parts $\tau_1(\alpha)$, $\tau_2(\alpha)$ of the roots of the representative polynomial $P(\alpha,\cdot)$ that belongs to the family (\ref{eq:fq}) (schematic).
The sliding intervals $\tau_1(\alpha)=0$, $\tau_2(\alpha)=0$ correspond to the black curves $\beta_1(\alpha)$, $\beta_2(\alpha)$ shown inside the dark grey domain $\fR$ on panel (a).
}
\label{figure2}
\end{figure}

\medskip
\begin{example}[Theorem \ref{theor:Haritonov2} with \textbf{(R5$^{\prime}$)}]\label{ex:interval1}
{\rm Fix $\varepsilon= 0.28$ and, for any real $\alpha$, define four intervals as follows:
\begin{equation}\label{eq:intervals-ex1}
\begin{aligned}
J_0(\alpha)&= \{4 - 4 \alpha^2\}; \\ J_1(\alpha)&= [4\alpha - 3 \alpha^2 + \alpha^3 - \varepsilon, 4\alpha - 3 \alpha^2 + \alpha^3 + \varepsilon];\\
J_2(\alpha)&= [5 - 3\alpha + \alpha^3  - \varepsilon,  5 - 3\alpha  +  \alpha^3  + \varepsilon]; \\ J_3(\alpha)& =  \{\alpha + \alpha^2\}.
\end{aligned}
\end{equation}
Consider the following forth order interval differential equation
\begin{equation}\label{eq:Example1}
J_0(\alpha)  y + J_1(\alpha)y^{\prime} + J_2(\alpha) y^{\prime\prime} + J_3 (\alpha)y^{\prime\prime\prime} + y^{\prime\prime\prime\prime} = \mathfrak r(\alpha,y,y',y'',y'''). 
\end{equation}
The characteristic equation of the linearization of \eqref{eq:Example1} at zero has the form
\begin{equation}\label{eq:fq}
\fQ(\alpha)(\lambda) = J_0(\alpha)  + J_1(\alpha)\lambda + J_2(\alpha)\lambda^2  + J_3(\alpha)\lambda^3 + \lambda^4.
\end{equation}
Following \eqref{eq:interval-shadows}, we compute
\begin{align*}
\text{Im}(g_1(\fQ(\alpha), i\beta))&= 4 - 4 \alpha^2 - (5 - 3\alpha + \alpha^3 + \varepsilon) \beta^2 + \beta^4;\\
\text{Im}(g_2(\fQ(\alpha), i\beta))&= 4 - 4 \alpha^2 - (5 - 3\alpha + \alpha^3 - \varepsilon) \beta^2 + \beta^4;\\
\text{Re}(h_1(\fQ(\alpha), i\beta))&= (4\alpha - 3\alpha^2 + \alpha^3 - \varepsilon)\beta - (\alpha + \alpha^2)\beta^3;\\
\text{Re}(h_2(\fQ(\alpha), i\beta))&= (4\alpha - 3\alpha^2 + \alpha^3 + \varepsilon)\beta - (\alpha + \alpha^2)\beta^3.
\end{align*}
Take $\alpha_-= -0.4$ and $\alpha_+= 0.8$ and define $\fR$ and $\fS$ as in \eqref{eq:interval-shadows}. Let us show that equation \eqref{eq:Example1}
satisfies conditions of  Theorem \ref{theor:Haritonov2} with \textbf{(R5$^{\prime}$)}. Since, by construction,
$\alpha \in [-0.4,0.8]$, \textbf{(R2)}  is satisfied (cf. the first formula in \eqref{eq:intervals-ex1}). To show  \textbf{(R3)} and  \textbf{(R4)},
we use Lemma \ref{Interval_one_root}. Observe that $\fQ(\alpha)(\lambda)$ from \eqref{eq:fq} is obtained from the polynomial
$P_o(\alpha)(\lambda) = (\lambda^2 + \alpha\lambda + 1 + \alpha) \cdot (\lambda^2 + \alpha^2\lambda + 4 - 4\alpha)$ by taking
$\varepsilon$-neighborhoods of some of its coefficients. By direct verification, $P_o(\alpha_+)$ is Hurwitz stable while  $P_o(\alpha_-)$ is $2$-unstable.
To complete the verification of condition  \textbf{(R3)} (resp., \textbf{(R4)}), it remains to observe that $\{(\alpha_-,\beta) \in [\alpha_-,\alpha_+]\times \bbR_+\} \cap \fR = \emptyset$
(resp.,  $\{(\alpha_+,\beta) \in [\alpha_-,\alpha_+]\times \bbR_+\} \cap \fR = \emptyset$). These last two relations as well as condition \textbf{(R5$^{\prime}$)} are illustrated by
Figure \ref{figure2}.}
\end{example}

\begin{figure}[ht]
\begin{center}
\includegraphics*[width=0.3\columnwidth]{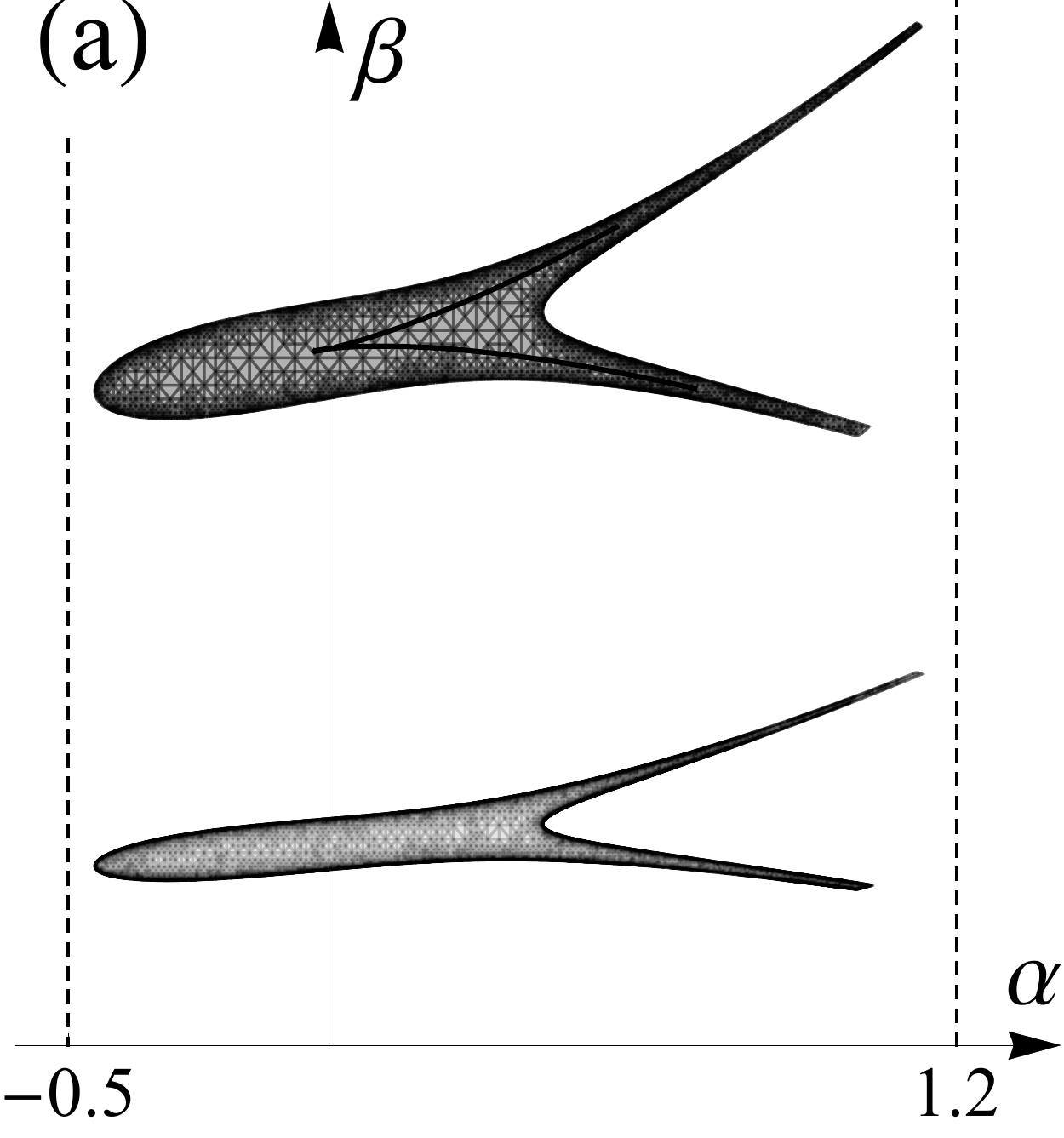} \qquad\quad \includegraphics*[width=0.28\columnwidth]{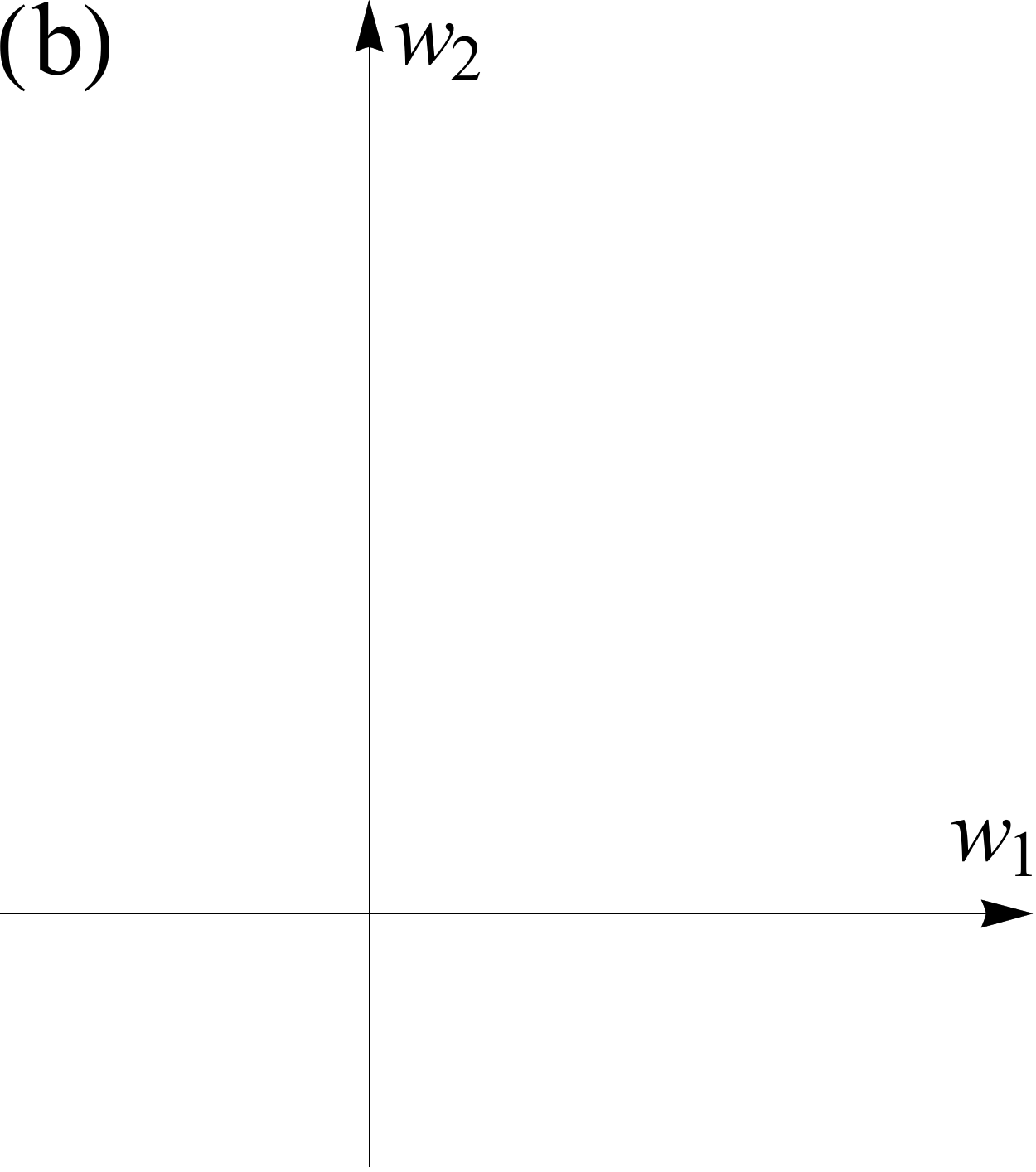}
\end{center}
\caption{(a) The nonintersecting sets $\fR$ (dark grey) and $\fS$ (light grey) for Example \ref{ex:interval2} with $[\alpha_1,\alpha_2]=[-0.5, 1.2]$.
The black curve is the set of purely imaginary roots of a representative polynomial $P(\alpha)(\cdot)$ that belongs to the family (\ref{eq:fq}).
At the corner point of this curve,  $P(\alpha)(\cdot)$ has a purely imaginary root of multiplicity 2.
The real parts of the roots of $P(\alpha)(\cdot)$ behave as shown on panel (b) of Figure \ref{figure2}. (b) Curves $(w_1(\alpha_\pm,\cdot),w_2(\alpha_\pm,\cdot))$
(thick lines) for Example \ref{ex:interval3}. The thin curve $(w_1(\alpha,\cdot),w_2(\alpha,\cdot))$ with $\alpha=0.075$ from the
interior of the interval $[\alpha_-,\alpha_+]$ intersects the negative cone $\{(w_1,w_2): w_1\le 0,\, w_2\le0\}$.}
\label{figure3}
\end{figure}

\medskip
\begin{example}[Theorem \ref{theor:Haritonov2} with \textbf{(R5$^{\prime\prime}$)}]\label{ex:interval2}
{\rm Fix $\varepsilon= 0.7$ and, for any real $\alpha$, define four intervals as follows:
\begin{equation}\label{eq:intervals-ex2}
\begin{aligned}
J_0(\alpha)&= [81 + 27\alpha + 2 \alpha^2 - \varepsilon,  81 + 27\alpha + 2 \alpha^2 + \varepsilon]; \\
J_1(\alpha)&= [9\alpha + 11 \alpha^2 + \alpha^3 - \varepsilon, 9\alpha + 11 \alpha^2 + \alpha^3 + \varepsilon];\\
J_2(\alpha)&= \{18 + 3\alpha + \alpha^3 \}; \\
J_3(\alpha)& =  [\alpha + \alpha^2 - \varepsilon, \alpha + \alpha^2 + \varepsilon].
\end{aligned}
\end{equation}
As in Example \ref{ex:interval1},  consider the interval differential equation \eqref{eq:Example1} and the characteristic polynomial
of its linearization \eqref{eq:fq}.
In this case,
\begin{align*}
\text{Re}(g_1(\fQ(\alpha), i\beta))&= (81 + 27\alpha + 2 \alpha^2 - \varepsilon) - (18 + 3\alpha + \alpha^3)\beta^2+\beta^4;\\
\text{Re}(g_2(\fQ(\alpha), i\beta))&= (81 + 27\alpha + 2 \alpha^2 + \varepsilon) - (18 + 3\alpha + \alpha^3)\beta^2+\beta^4;\\
\text{Im}(h_1(\fQ(\alpha), i\beta))&= (9\alpha + 11 \alpha^2 + \alpha^3 - \varepsilon)\beta - (\alpha + \alpha^2 + \varepsilon) \beta^3;\\
\text{Im}(h_2(\fQ(\alpha), i\beta))&= (9\alpha + 11 \alpha^2 + \alpha^3 + \varepsilon)\beta - (\alpha + \alpha^2 - \varepsilon) \beta^3.
\end{align*}
Consider the interval $[\alpha_-,\alpha_+]= [-0.5, 1.2]$. For this interval, conditions \textbf{(R2)} -- \textbf{(R4)} of Theorem \ref{theor:Haritonov2}
can be verified in the same way as in the previous example. In particular, one can use
the representative polynomial $P_o(\alpha)(\lambda)=(9 + \alpha + \alpha\lambda + \lambda^2)(9 + 2\alpha+ \alpha^2\lambda + \lambda^2)$
when proving \textbf{(R3)} and \textbf{(R4)}.
Figure \ref{figure3}a shows that condition \textbf{(R5$^{\prime\prime}$)} is also satisfied.
}
\end{example}

\medskip
\begin{example}[Theorem \ref{theor:Haritonov2} with \textbf{(R5$^{\prime\prime\prime}$)}]\label{ex:interval3}
{\rm Fix $\varepsilon= 1$. For any real $\alpha$, define five intervals
\begin{equation}\label{eq:intervals-ex3}
\begin{aligned}
J_0(\alpha)&= [36 - \varepsilon,  36 + \varepsilon]; \\ J_1(\alpha)&= [36 + 36\alpha - \varepsilon,  36 + 36\alpha + \varepsilon];\\
J_2(\alpha)&= [47  + 36\alpha - \varepsilon,  47 + 36\alpha + \varepsilon]; \\ J_3(\alpha)& =  [37 + 11\alpha -\varepsilon, 37 + 11\alpha +\varepsilon];\\
J_4(\alpha)&= [11 + \alpha -\varepsilon,   11 + \alpha +\varepsilon]
\end{aligned}
\end{equation}
and consider the fifth order interval differential equation
\begin{equation}\label{eq:Example3}
J_0(\alpha)  y + J_1(\alpha)y^{\prime} + J_2(\alpha) y^{\prime\prime} + J_3 (\alpha)y^{\prime\prime\prime} + J_4(\alpha) y^{\prime\prime\prime\prime} +y^{\prime\prime\prime\prime\prime}  =
 \mathfrak r(\alpha,y,y',y'',y''',y'''').
\end{equation}
The corresponding characteristic polynomial equals
\begin{equation}\label{eq:fq3}
\fQ(\alpha)(\lambda) = J_0(\alpha)  + J_1(\alpha)\lambda + J_2(\alpha)\lambda^2  + J_3(\alpha)\lambda^3 + J_4(\alpha)\lambda^4 + \lambda^5.
\end{equation}
Let us take $\alpha_-= -0.1$, $\alpha_+=0.09$  and show that
 conditions of  Theorem \ref{theor:Haritonov2} with \textbf{(R5$^{\prime\prime\prime}$)} are satisfied. By construction,
$\varepsilon = 1$, hence \textbf{(R2)} holds (cf. the first formula in \eqref{eq:intervals-ex3}). To show  \textbf{(R5$^{\prime\prime\prime}$)}, we apply
Lemma \ref{lem:interval-descartes}. To this end, put $Q(\lambda)= \lambda$ and $R(\lambda)= - 5$. By direct calculation,
$$
\cT(\fQ, Q,R)(\omega) = (-144 - 180\alpha \pm 6\varepsilon)\omega + (138 + 19\alpha \pm 6\varepsilon)\omega^3 +
(6 + \alpha \pm \varepsilon) \omega^5,
$$
where for brevity we denote the interval $[\mu-\varepsilon,\mu +\varepsilon]$ by $\mu \pm \varepsilon$. Since for $\alpha \in [\alpha_-,\alpha_+]$, $\cT$ has at most one sign change, property  \textbf{(R5$^{\prime\prime\prime}$)} is satisfied. Finally, to show  \textbf{(R3)} and  \textbf{(R4)}, we use the same argument as in the previous examples observing that $\mathfrak Q$  in \eqref{eq:fq3} is obtained from the polynomial $P_o(\alpha)(\lambda) = (2+ \lambda) \cdot (3+ \lambda)\cdot  (6+ \lambda) \cdot (1+ \alpha\lambda+\lambda^2)$ by taking
$\varepsilon$-neighborhoods of some of its coefficients. By direct verification, $P_o(\alpha_+)(\cdot)$ is Hurwitz stable while  $P_o(\alpha_-)(\cdot)$ is $2$-unstable.
To complete the verification of condition  \textbf{(R3)} (resp. \textbf{(R4)}), it remains to observe that the curves shown in Figure \ref{figure3}b don't intersect the negative cone $\{(x,y)\in \bbR^2\;:\;x\leq 0,y\leq 0\}$
(cf.~Lemma \ref{Interval_one_root}).
}
\end{example}

\section{Proof of Theorem \ref{thm:main-theorem}}\label{sec:main-theorem}
\subsection{Necessary condition for the Hopf bifurcation}\label{ddd}

Before proving Theorem \ref{thm:main-theorem}, let us show that the assumptions of Theorem \ref{thm:main-theorem} imply the classical necessary condition for the Hopf bifurcation.

\begin{proposition}\label{prop:Hopf-necessary}
Under the assumptions of Theorem \ref{thm:main-theorem}, there exists $(\alpha_o, \beta_o) \in \bigcup \mathcal {D}_k$ such that
$\Lambda(\alpha_o,\beta_o,0) = 0$.
\end{proposition}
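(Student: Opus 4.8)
The plan is to dispose of the sets $\mathcal{D}_k$ immediately: by \textbf{(P5)}(ii) one has $\Lambda^{-1}(0)\cap\bigl(\bigcup_k\mathcal{D}_k\times\{0\}\bigr)=\Lambda^{-1}(0)\cap\mathcal{P}_0$, where $\mathcal{P}_0=\mathcal{P}\cap([\alpha_-,\alpha_+]\times\mathbb{R}\times\{0\})$, so it suffices to produce a single point $(\alpha_o,\beta_o,0)\in\mathcal{P}_0$ with $\Lambda(\alpha_o,\beta_o,0)=0$; such a point automatically satisfies $(\alpha_o,\beta_o)\in\bigcup_k\mathcal{D}_k$, which is exactly the assertion. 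I would then argue by contradiction, assuming $\Lambda(\alpha,\beta,0)\neq0$ for every $(\alpha,\beta,0)\in\mathcal{P}$. Combined with \textbf{(P4)}(ii), this says that every root of $\Lambda$ on $\partial\mathcal{P}$ lies in $\mathcal{P}_+\cup\mathcal{P}_-$ (that is, has $\alpha\in\{\alpha_-,\alpha_+\}$), and that $\Lambda$ has no root $(\alpha,\beta,\tau)\in\mathcal{P}$ with $\tau=0$ whatsoever.

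Next I would introduce a counting function. For $\alpha\in[\alpha_-,\alpha_+]$ put $\mathcal{P}_\alpha=\mathcal{P}\cap(\{\alpha\}\times\mathbb{R}\times\mathbb{R}_+)$ and let $n(\alpha)$ be the number of roots of $\Lambda(\alpha,\cdot,\cdot)$ — equivalently, of eigenvalues $\tau+i\beta$ of $D_xf(\alpha,0)$ — lying in $\mathcal{P}_\alpha$, counted with multiplicity. By \textbf{(P1)} the matrix $D_xf(\alpha,0)$, and hence the list of its eigenvalues with multiplicity, depends continuously on $\alpha$. For $\alpha_*\in(\alpha_-,\alpha_+)$ the reduction step shows that no root of $\Lambda(\alpha_*,\cdot,\cdot)$ lies on $\partial\mathcal{P}$; since $\alpha_*\neq\alpha_\pm$, every root of $\Lambda(\alpha_*,\cdot,\cdot)$ in $\mathcal{P}_{\alpha_*}$ in fact lies in the interior of $\mathcal{P}$, so it survives, with its full multiplicity, inside $\mathcal{P}_\alpha$ for $\alpha$ near $\alpha_*$, while the slices $\mathcal{P}_\alpha$ are upper semicontinuous in $\alpha$ (automatic since $\mathcal{P}$ is compact), so that no new roots can enter $\mathcal{P}_\alpha$. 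Hence $n$ is locally constant on $(\alpha_-,\alpha_+)$, and therefore $n\equiv c$ there for some integer $c\geq0$.

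Finally I would identify $c$ with the numbers in \textbf{(P4)}(iii). Let $z_0=\tau_0+i\beta_0$ be a root of $\Lambda(\alpha_-,\cdot,\cdot)$ lying in $\mathcal{P}_-$; by the reduction step $\tau_0>0$. As $\alpha$ increases from $\alpha_-$ this root moves continuously to $z(\alpha)$; if $(\alpha,z(\alpha))$ were to leave $\mathcal{P}$ for some $\alpha$ arbitrarily close to $\alpha_-$, then at the exit value $\alpha'$ the point $(\alpha',z(\alpha'))$ would be a root of $\Lambda$ on $\partial\mathcal{P}$ with $\alpha'\in(\alpha_-,\alpha_+)$ and $\tau$-coordinate close to $\tau_0>0$, i.e. in $\partial\mathcal{P}\setminus(\mathcal{P}_+\cup\mathcal{P}_-\cup\mathcal{P}_0)$, contradicting \textbf{(P4)}(ii). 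So every root of $\Lambda(\alpha_-,\cdot,\cdot)$ in $\mathcal{P}_-$ persists, with multiplicity, inside $\mathcal{P}_\alpha$ for $\alpha$ slightly above $\alpha_-$, and by upper semicontinuity of the slices the remaining roots stay out; thus $n(\alpha)$ equals the number of roots of $\Lambda(\alpha_-,\cdot,\cdot)$ in $\mathcal{P}_-$ for $\alpha$ near $\alpha_-$, and symmetrically near $\alpha_+$. Together with $n\equiv c$, this forces $\mathcal{P}_-$ and $\mathcal{P}_+$ to contain the same number of roots of $\Lambda$, contradicting \textbf{(P4)}(iii) and completing the proof. The hard part is the two continuation steps — the local constancy of $n$ in the interior and the persistence of the endpoint roots — both of Rouché type; what makes them go through is precisely \textbf{(P4)}(ii), which confines all roots of $\Lambda$ to the caps $\mathcal{P}_\pm$ and the floor $\mathcal{P}_0$ of the trap and keeps them off its ``side walls''. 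One must also be slightly careful with roots sitting on the relative boundary of $\mathcal{P}_\pm$, but since such roots still have $\tau>0$, the same escape argument applies to them verbatim.
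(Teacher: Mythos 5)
Your reduction to finding a zero of $\Lambda(\cdot,\cdot,0)$ in $\mathcal{P}_0$ (via \textbf{(P5)}(ii)) and your local-constancy argument for $n(\alpha)$ on the open interval $(\alpha_-,\alpha_+)$ are essentially sound, but the endpoint step --- that $n(\alpha)$ equals the number $\mathfrak{t}_-$ of roots in $\mathcal{P}_-$ for $\alpha$ slightly above $\alpha_-$ --- has a genuine gap, and it is exactly the step your contradiction needs. Your escape argument presumes that a root which starts in $\mathcal{P}_-$ and later lies outside $\mathcal{P}$ must first meet $\partial\mathcal{P}$ at some $\alpha'\in(\alpha_-,\alpha_+)$ with $\tau>0$, contradicting \textbf{(P4)}(ii). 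But every point of the cap $\mathcal{P}_-$ already lies on $\partial\mathcal{P}$ (because $\mathcal{P}\subset[\alpha_-,\alpha_+]\times\mathbb{R}\times\mathbb{R}_+$), so the ``exit value'' in your dichotomy can be $\alpha_-$ itself: the root can pass directly from the cap into the exterior of $\mathcal{P}$ and never touch $\partial\mathcal{P}$ at any later $\alpha$, in which case no point of $\partial\mathcal{P}\setminus(\mathcal{P}_+\cup\mathcal{P}_-\cup\mathcal{P}_0)$ is ever visited and \textbf{(P4)}(ii) is not contradicted. The simplest instance is a root on the relative boundary (rim) of $\mathcal{P}_-$ moving outward: take $\mathcal{P}=[\alpha_-,\alpha_+]\times D$ a cylinder and a conjugate pair of eigenvalues starting on the rim of $D$ and moving radially outward; such a pair is counted in $\mathfrak{t}_-$, is allowed by \textbf{(P4)}(ii) (the rim belongs to $\mathcal{P}_-$), yet contributes to no $n(\alpha)$ with $\alpha>\alpha_-$. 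This is precisely the case you dismiss in your last sentence; the escape argument does not apply to it ``verbatim'' because the exit happens at $\alpha'=\alpha_-$. Even roots in the relative interior of the cap are not automatically safe: ``homeomorphic to a closed ball'' does not force the slices $\mathcal{P}_\alpha$ to be lower semicontinuous at $\alpha_\pm$, so $\mathcal{P}$ can pull away from a cap point as $\alpha$ increases without the root ever hitting the side wall. Without the endpoint identification all you can extract is $c\le\min(\mathfrak{t}_-,\mathfrak{t}_+)$ (the $c$ interior roots converge, with multiplicity, into the closed caps as $\alpha\to\alpha_\pm$), which does not conflict with $\mathfrak{t}_-\neq\mathfrak{t}_+$, so the contradiction never materializes.

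This is also where your route diverges from the paper, which avoids tracking individual roots altogether: it observes that $\deg(\Lambda,\partial\mathcal{P})=0$ because $\Lambda|_{\partial\mathcal{P}}$ is not surjective ($\partial\mathcal{P}\cong S^2$ is compact), splits this degree by excision --- legitimate by \textbf{(P4)}(ii) --- into the contributions of $\mathcal{P}_+$, $\mathcal{P}_-$ and $\mathcal{P}_0$, identifies the cap contributions with $\pm\mathfrak{t}_\pm$, and concludes $\deg(\Lambda,\mathcal{P}_0)=\mathfrak{t}_--\mathfrak{t}_+\neq0$; the zero in $\bigcup\mathcal{D}_k$ then follows from \textbf{(P5)}(ii), the $\mathbb{Z}_2$-symmetry, and the existence property of the Brouwer degree. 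The algebraic (degree) count is insensitive to roots slipping in or out through the closed caps, which is exactly where your pointwise continuation breaks down; to salvage your approach you would need an additional hypothesis (or argument) guaranteeing no roots on the rims of $\mathcal{P}_\pm$ and some continuity of the slices of $\mathcal{P}$ near $\alpha_\pm$, i.e., essentially a winding-number argument in disguise.
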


\begin{proof} Observe (cf.~property \textbf{(P4)}(i)) that $\partial \cP$ is homeomorphic to a $2$-dimensional sphere.
Take the standard orientation on $\bbR^2$ and induce an orientation on $\mathcal{P}_0\subset \partial \cP$. This orientation canonically induces orientations on $\cP_{\pm}$ and the orientation on $\partial \cP$. In particular,  the local Brouwer degree for $\Lambda|_{\partial \cP} :
\partial \cP \to \mathbb C$ is correctly defined (provided that, say, the standard orientaion on $\mathbb C \simeq \mathbb R^2$
is chosen). Since $\partial \cP$ is compact and $\mathbb C$ is not compact, it follows
that $\Lambda|_{\partial \cP}$ is not surjective and, therefore,
\begin{equation}\label{eq:deg-zero}
\deg(\Lambda, \partial \cP) = 0
\end{equation}
(cf.~\cite{Dold}, Chapter VIII, Subsection 4.5).
Combining \eqref{eq:deg-zero} with condition \textbf{(P4)}(ii) and the
excision property of the local Brouwer degree, one has (cf.~\cite{AED}, p. 277):
\begin{equation}\label{eq:main-connect-cross}
\deg(\Lambda,\partial \cP) = \deg(\Lambda,\cP_+) + \deg(\Lambda,\cP_-) + \deg(\Lambda,\mathcal {P}_0) =0.
\end{equation}
By construction, the orientation on $\cP_+$ (resp., $\cP_-$) coincides with the orientation on $\mathcal {P}_0$ (resp.,
is opposite to it). Denote by  $\mathfrak t_\pm$ the number of roots of  $\Lambda(\alpha_\pm,\beta,\tau)$
in $\mathcal{P}_\pm$ (counted according to their multiplicities). It is easy to see that $\mathfrak t_\pm = \pm \deg(\Lambda,  \cP_{\pm})$.
This observation together with formula \eqref{eq:main-connect-cross}
implies
\begin{equation*}
\deg(\Lambda, \mathcal {P}_0) = \mathfrak t_- - \mathfrak t_+ \not= 0
\end{equation*}
(cf.~condition \textbf{(P4}(iii)).  On then other hand, combining condition \textbf{(P5)}(ii) with the $\mathbb Z_2$-equivariance of $\Lambda$ (see
condition \textbf{(P4)}) yields
\begin{equation}\label{eq:first-crossing}
 {\mathfrak t_- - \mathfrak t_+  \over 2} = {1 \over 2 } \deg(\Lambda, \mathcal {P}_0) =  \deg(\Lambda, \bigcup \mathcal {D}_k).
\end{equation}
By the existence property of the Brouwer degree, the conclusion follows.
 \end{proof}

 \subsection{Normalization of the period}
We are looking for periodic solutions, with unknown period $p$, of the  differential equation
$$
\dot x = f(\alpha,x).
$$
Following the standard scheme, let us introduce the unknown period $p$ as an additional parameter. Define
$
\beta= \frac{2\pi}{p}
$
and apply the change of variables
$$
u(t) = x\Big(\frac{p t}{2\pi}\Big)
$$
to obtain the system
\begin{equation}\label{eq:normal-system}
\begin{cases}
\dot u = \frac{1}{\beta}f(\alpha,u),\\
u(0) = u(2\pi).
\end{cases}
\end{equation}
We are now in a position to reformulate the original problem as an operator equation in the appropriate space of $2\pi$-periodic functions and apply the equivariant degree method.

 \subsection{$S^1$-representations}\label{subsec:S1-representation} We will
 use the first Sobolev space of  functions on the unit circle equipped with the natural structure of $S^1$-representation induced by the shift in time. Let us recall some standard facts related to $S^1$-representations. As is well-known (see, for example, \cite{BtD}), any real irreducible $S^1$-representation
is of dimension $1$ or $2$ and can be described as follows. Take an integer $l > 0$ and define the $S^1$-action on $\mathbb C \simeq \mathbb R^2$ by  $(e^{i\varphi},z) \mapsto e^{il\varphi}\cdot z$, where ``$\cdot$'' stands for complex multiplication,(denote this representation $\mathcal V_l$); also, denote by $\mathcal V_0$
the trivial one-dimensional $S^1$-representation.

Define $V=\bbR^n$. Denote by $W = H^1(S^1;V)$ the first Sobolev space of functions from $S^1$ to $V$. Observe that $W$
admits the ``Fourier decomposition"
\begin{equation}\label{eq:Fourier-decomp}
W = \overline{V \oplus \bigoplus^\infty_{l=1} W_l},
\end{equation}
where the subspace of zero Fourier modes (i.e., constant functions) is identified with $V$, while the subspace of the $l$-th Fourier modes $W_l$ is identified with the complexification of $V$ (denoted $V^c$). In particular, any function $u\in W_l$ can be written in the form $e^{ilt} \cdot (x_l + i y_l)$ for some $x_l, y_l \in V$. 
There is a natural orthogonal $S^1$-representation on $W$ given by
\begin{equation}\label{eq:S1-action}
(e^{i\varphi}, u)(t) \mapsto u(t + \varphi), \quad e^{i\varphi} \in S^1, \; u \in W.
\end{equation}
Formula \eqref{eq:S1-action} gives rise to the trivial action on $V$ and the action $(e^{i\varphi}, u)(t) \mapsto e^{il\varphi} \cdot u(t)$ on $W_l$.

\subsection{Reformulation in the functional space}

Take the first Sobolev space $W$ and define the orthogonal projector $K : W \to L^2(S^1;V)$ by
\begin{equation}\label{eq:ortho}
K(u)= {1 \over 2\pi}\int_0^{2\pi} u(t) dt, \quad u \in W.
\end{equation}
We can now rewrite \eqref{eq:normal-system} as the following operator equation in $[\alpha_-,\alpha_+]\times \bbR_+\times W$:
\begin{equation}\label{eq:operator-eq}
\mathfrak F(\alpha,\beta,u)= u - \mathcal F(\alpha,\beta,u) = 0, \quad (\alpha,\beta) \in [\alpha_-,\alpha_+]\times \bbR_+, \;\; u \in W,
\end{equation}
where
\begin{equation}\label{eq:def-calF}
\mathcal F(\alpha,\beta,u)= (L+K)^{-1}\Big({1 \over \beta} F(\alpha,u) + Ku\Big),
\end{equation}
$L : W \to  L^2(S^1;V)$ is given by $L(x)= \dot{x}$ and $F : \mathbb R \times W \to L^2(S^1;V)$ is defined by $F(\alpha,u)(t)= f(\alpha,u(t))$. Formula \eqref{eq:S1-action} gives rise to the $S^1$-action on $[\alpha_-,\alpha_+]\times \bbR_+ \times W$ (we assume that $S^1$ acts trivially on $[\alpha_-,\alpha_+]\times \bbR_+$). Moreover, it is easy to see that $\mathfrak F$ given by \eqref{eq:operator-eq}  and
\eqref{eq:def-calF} is $S^1$-equivariant.

\subsection{Reducing the problem to computing $S^1$-degree}
In order to apply the equivariant degree method, we need to localize potential bifurcating branches in a cylindric box
$\Omega \subset [\alpha_-,\alpha_+]\times \mathbb R_+ \times W$ in such a way that the operator \eqref{eq:operator-eq} is
$\Omega$-admissible. To this end, consider the sets  $\mathcal D_k$  provided by condition {\bf (P5)} and put $\Sigma_k = \partial \mathcal D_k$. Since $\Sigma_k$ is compact, there are disjoint neighborhoods $N_k$ of $\Sigma_k \subset [\alpha_-,\alpha_+]\times \bbR_+$ such that $\Lambda (\alpha, \beta, 0) \neq 0$ for all $(\alpha, \beta) \in \overline{N_k}$. Set
\begin{equation}\label{eq:tilde}
\widetilde{\cD}_k= \cD_k \cup N_k; \quad \widetilde{\Sigma}_k =\partial \widetilde{\cD}_k;\quad \widetilde{\cD}= \bigcup \widetilde{\cD}_k ; \quad \quad \widetilde{\Sigma} =\partial \widetilde{\cD}; \quad a(\alpha,\beta) = {\rm Id} - D_u\mathcal F(\alpha,\beta,0),
\end{equation}
where $D_u \mathcal F$ denotes the derivative of $\mathcal F$ with respect to $u$ (cf.~\eqref{eq:def-calF}).

\begin{lemma}\label{lem:compact-oper}
There exists  a disc $B_r(0) \subset W$ of radius $r$ centered at the origin such that
for all points $(\alpha,\beta,u) \in \widetilde{\Sigma} \times (B_r(0)\setminus \{0\})$, the following holds:
\medskip

(i) $u - \cF(\alpha,\beta,u)\neq 0$;

\medskip
(ii)  the fields $\mathfrak F(\alpha,\beta,\cdot)$ and $a(\alpha,\beta)$ are $S^1$-equivariantly
homotopic on $B_r(0)$.
\end{lemma}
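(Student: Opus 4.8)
\textbf{Proof strategy for Lemma \ref{lem:compact-oper}.}

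The plan is to split the claim into the two assertions and handle them by a compactness-plus-linearization argument. First I would record the structural facts about $\cF$ that make everything work: the operator $(L+K)^{-1}$ is compact as a bounded operator from $L^2(S^1;V)$ into $W$ (it smooths by one derivative), while $u \mapsto \frac{1}{\beta}F(\alpha,u) + Ku$ is continuous and maps bounded sets of $W$ into bounded sets of $L^2(S^1;V)$ by \textbf{(P0)}; hence $\cF$ is a completely continuous $S^1$-equivariant map on each bounded set, and so is $\mathfrak F = \id - \cF$ a compact vector field. Since $\mathfrak F(\alpha,\beta,\cdot)$ is $C^1$ in a neighborhood of $0$ with derivative $D_u\cF(\alpha,\beta,0)$ obtained by replacing $f$ with its linearization $D_xf(\alpha,0)$ in \eqref{eq:def-calF}, and since \textbf{(P1)} gives the uniform-in-$\alpha$ estimate \eqref{eq:ref-uniform-converge}, I get the key remainder bound: for every $\eta>0$ there is $r_0>0$ with
\[
\| \mathfrak F(\alpha,\beta,u) - a(\alpha,\beta)u \| \le \eta \| u \|
\qquad \text{for all } (\alpha,\beta)\in\wt\Sigma,\; \|u\|\le r_0 .
\]
Here I use that $\wt\Sigma$ is compact so $\beta$ is bounded away from $0$ on $\wt\Sigma$ (it is a compact subset of $[\alpha_-,\alpha_+]\times\bbR_+$ on which, by the choice of the neighborhoods $N_k$ in \eqref{eq:tilde}, $\beta=0$ does not occur, since $\Lambda(\alpha,0,0)=\det_{\bbC}(-D_xf(\alpha,0))\ne 0$ by \textbf{(P3)} so the $\beta=0$ slice carries no zeros of $\Lambda$ and in particular lies outside each $\wt\Sigma_k$), hence $\frac1\beta$ is uniformly bounded and the linearization is uniform.

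Next I would show $a(\alpha,\beta)$ is invertible for every $(\alpha,\beta)\in\wt\Sigma$. Decomposing $W$ via the Fourier decomposition \eqref{eq:Fourier-decomp}, the operator $a(\alpha,\beta)=\id - D_u\cF(\alpha,\beta,0)$ is block-diagonal: on the zero mode $V$ it acts (up to the isomorphism $(L+K)^{-1}K$) through $D_xf(\alpha,0)$, which is invertible by \textbf{(P3)}; on the $l$-th mode $W_l\cong V^c$ it acts as a finite-dimensional operator whose determinant is, up to a nonzero scalar, $\Lambda(\alpha, l\beta, 0)$ — this is precisely the point of introducing $\Lambda$ in \eqref{def:Lambda}. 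By the construction of $\wt\Sigma_k$ we have $\Lambda(\alpha,\beta,0)\ne 0$ on $\overline{N_k}\supset\wt\Sigma_k$, and by \textbf{(P5)}(iii) we have $\Lambda(\alpha,l\beta,0)\ne 0$ on $\partial\cD_k$ for every $l\in\bbN$; a short argument (choosing the neighborhoods $N_k$ slightly smaller if necessary, using continuity of $\Lambda$ and the fact that for $(\alpha,l\beta)$ with $l$ large the value $l\beta$ leaves any fixed compact $\beta$-interval while $\Lambda(\alpha,\cdot,0)$ has finitely many zeros) upgrades this to $\Lambda(\alpha,l\beta,0)\ne 0$ for all $l\ge 1$ and all $(\alpha,\beta)\in\wt\Sigma$. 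Therefore every Fourier block of $a(\alpha,\beta)$ is invertible; combined with the fact that $a(\alpha,\beta)$ is a compact perturbation of the identity, $a(\alpha,\beta)$ is an isomorphism of $W$, and by compactness of $\wt\Sigma$ there is $c>0$ with $\|a(\alpha,\beta)u\|\ge c\|u\|$ uniformly on $\wt\Sigma$.

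Finally I combine the two estimates. Choosing $\eta = c/2$ in the remainder bound and setting $r = r_0$, I get for all $(\alpha,\beta)\in\wt\Sigma$ and $0<\|u\|\le r$:
\[
\|\mathfrak F(\alpha,\beta,u)\| \ge \|a(\alpha,\beta)u\| - \|\mathfrak F(\alpha,\beta,u)-a(\alpha,\beta)u\| \ge c\|u\| - \tfrac{c}{2}\|u\| = \tfrac{c}{2}\|u\| > 0,
\]
which is (i). For (ii), the linear homotopy $h_s(\alpha,\beta,u) = (1-s)\mathfrak F(\alpha,\beta,u) + s\,a(\alpha,\beta)u$ is a homotopy of compact $S^1$-equivariant vector fields (each $h_s$ is $\id$ minus a completely continuous $S^1$-map), and the same estimate gives $\|h_s(\alpha,\beta,u)\|\ge (c\|u\| - \eta\|u\|) \ge \tfrac{c}{2}\|u\| > 0$ for $0<\|u\|\le r$ uniformly in $s\in[0,1]$ and $(\alpha,\beta)\in\wt\Sigma$, so the homotopy is admissible on $\wt\Sigma\times(B_r(0)\setminus\{0\})$ — in fact on $\wt\Sigma\times B_r(0)$ after noting $h_s(\alpha,\beta,0)=0$ is the only zero on the $0$-section. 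I expect the main obstacle to be the bookkeeping in the second paragraph: carefully arranging the neighborhoods $N_k$ so that the non-resonance condition \textbf{(P5)}(iii), which is stated only on the curves $\partial\cD_k$, survives on the slightly fattened boundaries $\wt\Sigma_k$ and simultaneously for all Fourier levels $l$, together with making the identification of the $l$-th Fourier block of $D_u\cF(\alpha,\beta,0)$ with $\Lambda(\alpha,l\beta,0)$ fully precise.
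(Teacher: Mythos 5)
Your proposal is correct and rests on the same underlying idea as the paper's proof -- uniform linearization of $\fF$ at $u=0$ over the compact set $\wt\Sigma$ (using \textbf{(P1)} and the fact that $1/\beta$ stays bounded there) together with invertibility of $a(\alpha,\beta)$ coming from \textbf{(P3)} and \textbf{(P5)}(iii) -- but it is organized differently. The paper proves (i) indirectly: it assumes a sequence of zeros $(\alpha_j,\beta_j,u_j)$ with $u_j\to 0$, normalizes by $\|u_j\|$, uses compactness of $D_u\cF(\alpha_*,\beta_*,0)$ to extract a convergent subsequence, and derives a nonzero $v_*$ in the kernel of $\id-D_u\cF(\alpha_*,\beta_*,0)$, contradicting the non-degeneracy of $a$ on $\wt\Sigma$; part (ii) is then dismissed as following from ``the standard linearization argument''. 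You instead prove both parts at once by the direct quantitative route: a uniform remainder bound $\|\fF(\alpha,\beta,u)-a(\alpha,\beta)u\|\le\eta\|u\|$, a uniform lower bound $\|a(\alpha,\beta)u\|\ge c\|u\|$ (via Fourier-blockwise invertibility and compactness of $\wt\Sigma$), and the admissible linear homotopy $(1-s)\fF+s\,a$. This buys you an explicit verification of (ii), and you are in fact more careful than the paper on one point it glosses over: \textbf{(P5)}(iii) is stated only on $\partial\cD_k$, while the contradiction/invertibility is needed on the fattened boundary $\wt\Sigma$, and your ``finitely many relevant $l$ plus continuity'' argument is exactly the missing bookkeeping. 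One caveat: your side remark that $\beta=0$ cannot occur on $\wt\Sigma$ ``since the $\beta=0$ slice carries no zeros of $\Lambda$'' is a non sequitur ($\wt\Sigma$ consists of non-zeros of $\Lambda$ by construction); that $\beta$ is bounded away from $0$ on $\wt\Sigma$ is really an implicit assumption on the choice of the $\cD_k$ and $N_k$, which the paper itself invokes with an unexplained ``Observe'', so this does not put your argument behind the paper's.
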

\begin{proof}

(i) For a contradiction, suppose that for all $\rho > 0$, there exists $(\alpha,\beta,u) \in \widetilde{\Sigma} \times (B_{\rho}\setminus\{0\})$ with $u -\cF(\alpha,\beta,u)= 0$. Since $\widetilde{\Sigma}$ is compact, without loss of generality, assume that there exists a sequence $(\alpha_j,\beta_j,u_j)$ converging to $(\alpha_*,\beta_*,0)$ such that $(\alpha_j,\beta_j) \in \widetilde{\Sigma}$, $u_j - \cF(\alpha_j,\beta_j,u_j) = 0$ and $u_j \not=0$. Then,
$$
\frac{u_j}{\|u_j\|}-\frac{\cF(\alpha_j,\beta_j,u_j)}{\|u_j\|}=0.
$$
Observe that $(\alpha_j,\beta_j)\in \widetilde\Sigma$ implies that $\beta_j$ does not converge to $0$. Combinig this with assumption {\bf (P1)} and \eqref{eq:def-calF} yields
\begin{equation}\label{eq:dif1}
\frac{u_j}{\|u_j\|}- D_u\mathcal{F}(\alpha_j,\beta_j,0)\frac{u_j}{\|u_j\|} + \frac{r(\alpha_j,\beta_j,u_j)}{\|u_j\|}=0,
\end{equation}
where $r(\alpha_j,\beta_j,u_j) = \cF(\alpha_j,\beta_j,u_j)-D_u\mathcal{F}(\alpha_j,\beta_j,0) u$. Also,
\begin{equation}\label{eq:decomp}
D_u\mathcal{F}(\alpha_j,\beta_j,0)\frac{u_j}{\|u_j\|} = D_u\mathcal{F}(\alpha_*,\beta_*,0)\frac{u_j}{\|u_j\|} + (D_u\mathcal{F}(\alpha_j,\beta_j,0)- D_u\mathcal{F}(\alpha_*,\beta_*,0)){u_j}/{\|u_j\|}.
\end{equation}
Since $D_u\mathcal{F}(\alpha_*,\beta_*,0)$ is compact, without loss of generality, we can assume that $D_u\mathcal{F}(\alpha_*,\beta_*,0){u_j}/{\|u_j\|}$ converges to some $v_*$. In addition, keeping in mind that $D_u\mathcal{F}(\alpha_*,\beta_*,0)$ depends continuously on $\alpha,\beta$, it follows from \eqref{eq:decomp} that
$D_u\mathcal{F}(\alpha_j,\beta_j,0){u_j}/{\|u_j\|}$ converges to $v_*$. Combining this with \eqref{eq:ref-uniform-converge} and \eqref{eq:dif1}
yields that ${u_j}/{\|u_j\|}$ converges to $v_* \neq 0$. Hence (see  \eqref{eq:dif1} once again),
\begin{equation*}
v_* - D_u\mathcal{F}(\alpha_*,\beta_*,0) v_* =0
\end{equation*}
meaning that $\text{Id}-D_u\mathcal{F}(\alpha_*,\beta_*,0)$ is not invertible, which contradicts \textbf{(P5)}(iii).

\medskip

(ii) This part trivially follows from the compactness of $\widetilde{\Sigma}$ and condition \textbf{(P5)}(iii) combined with the
standard linearization argument.
\end{proof}

\bigskip

Take $\widetilde{\mathcal D}_k$ given by \eqref{eq:tilde} and $B_r(0)$ provided by Lemma \ref{lem:compact-oper}. Define
\begin{equation}\label{eq:Omega}
\Omega_k = \{ \left(\alpha,\beta, u\right) \in[\alpha_-,\alpha_+]\times \bbR_+ \times W : (\alpha,\beta) \in \widetilde{\cD}_k, \; u \in B_r(0)\},\qquad
\Omega = \bigcup \Omega_k.
\end{equation}
Clearly, $\Omega$ is $S^1$-invariant. By the existence of the invariant Urysohn function, one can take an invariant function
$\varsigma: \Omega \to \bbR$ satisfying the properties
\begin{equation}\label{eq:Urysohn-properties}
\varsigma( \alpha,\beta, 0) <0; \qquad \varsigma( \alpha,\beta, u) >0 \quad \text{for}\quad \|u\|=r.
\end{equation}
Consider the map $\fF_\varsigma: \Omega \to \bbR \oplus W$ given by
\begin{equation}\label{eq:Hopf-map}
\fF_\varsigma(\alpha,\beta,u)= (\varsigma(\alpha,\beta,u), \fF(\alpha,\beta,u)).
\end{equation}
By definition, any solution to the equation $\fF_\varsigma(\alpha,\beta,u)=0$ is also a solution to \eqref{eq:normal-system}.
In addition, $\fF_\varsigma$ is an $S^1$-equivariant $\Omega$-admissible map for which $\sdeg(\fF_\varsigma, \Omega)$ is correctly defined.

\begin{remark}\label{rem:independ-Urysohn}
{\rm As long as an invariant Urysohn function $\varsigma$ satisfies properties \eqref{eq:Urysohn-properties}, $\sdeg(\fF_\varsigma,\Omega)$ is independent of
the choice of $\varsigma$ (homotopy property of the $S^1$-degree).}
\end{remark}

\medskip

The next statement provides a sufficient condition for the existence of a branch of periodic solutions
bifurcating from the trivial solution
(cf.~Definition \ref{YTM}). We follow the scheme suggested in \cite{AED} (see Theorem 9.18) with several modifications making the argument more transparent.
\begin{proposition}\label{thm:ode18}
Given system (\ref{eq:sys-HB}), assume conditions \textbf{(P0) -- (P5)} are satisfied.
Take $\Omega$ defined by \eqref{eq:Omega} and $\frak F_\varsigma$ defined by (\ref{eq:Hopf-map}). Assume $\s1deg(\frak F_\varsigma,\Omega)\not=0$.
Then, system (\ref{eq:sys-HB}) has a branch of periodic solutions
bifurcating from the trivial solution.
\end{proposition}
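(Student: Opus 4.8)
\textbf{Proof plan for Proposition \ref{thm:ode18}.}

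The plan is to argue by contradiction using the standard Rabinowitz-type alternative adapted to the $S^1$-equivariant setting, exactly as in the scheme of \cite[Theorem 9.18]{AED}. Suppose no branch of periodic solutions bifurcating from the trivial solution exists in the sense of Definition \ref{YTM}. First I would consider the set $\cN$ of all non-trivial solutions $(\alpha,\beta,u)$ of $\fF_\varsigma(\alpha,\beta,u)=0$ together with the relevant portion of the trivial solution set inside $\Omega$, and observe that by the absence of a bifurcating branch the connected component of the closure of this solution set containing the ``interesting'' part of the trivial stratum is compact and disjoint from the rest. The key point is that $\sdeg$ is supported on solutions: if $\sdeg(\fF_\varsigma,\Omega)\neq 0$, then $\fF_\varsigma$ has a zero in $\Omega$ which, by the Urysohn properties \eqref{eq:Urysohn-properties}, must have $u\neq 0$, hence corresponds to a non-constant periodic solution of \eqref{eq:normal-system} and thus of \eqref{eq:sys-HB}.

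Second, I would promote this single non-trivial zero to a genuine branch. Using the separation and compactness just described, invoke the standard Kuratowski–Whyburn lemma (a compact set cannot be separated into two disjoint closed pieces, one meeting a prescribed subset and one not, unless the prescribed subset is a union of whole components) to upgrade ``there exists a non-trivial solution'' into ``there exists a connected set of solutions whose closure, together with the trivial stratum over $[\alpha_-,\alpha_+]\times[p_-,p_+]$, is connected and compact.'' The role of $\Omega$ being a cylindric box $\widetilde\cD\times B_r(0)$ with $\fF_\varsigma$ being $\Omega$-admissible (Lemma \ref{lem:compact-oper}(i) guarantees admissibility on $\widetilde\Sigma\times(B_r(0)\setminus\{0\})$) is precisely to make these topological separation arguments go through: no solution escapes through the lateral boundary $\widetilde\Sigma\times B_r(0)$ at $u\neq 0$, and none escapes through $\|u\|=r$ because $\varsigma>0$ there.

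Third, I would translate back: a connected compact set of solutions of $\fF_\varsigma=0$ is in particular a connected compact set of solutions of \eqref{eq:normal-system}, and reverting the change of variables $u(t)=x(pt/2\pi)$, $\beta=2\pi/p$, gives exactly a set $\Gamma$ of non-constant periodic solutions $(\alpha,p,x(t))$ of \eqref{eq:sys-HB} whose union with the trivial stratum $[\alpha_-,\alpha_+]\times[p_-,p_+]\times\{0\}$ is connected and compact — i.e. a branch in the sense of Definition \ref{YTM}. The periods stay in a compact subinterval of $(0,\infty)$ because $\beta$ ranges over the compact set $\widetilde\cD$ which, by construction from condition \textbf{(P5)}, is bounded away from $\beta=0$.

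The main obstacle — and the part requiring genuine care rather than routine citation — is the verification that $\sdeg(\fF_\varsigma,\Omega)$ is nonzero is \emph{not} part of this Proposition; here it is a hypothesis, so the remaining work is the contradiction/connectedness step. The delicate point there is ensuring that the ``trivial stratum'' one attaches is the correct one and that the component of the solution set that carries the nonzero degree actually meets it: one must check that a non-trivial zero guaranteed by the degree cannot be ``isolated'' from the trivial solutions within $\Omega$ without contradicting the admissibility and the excision/existence properties of $\sdeg$ recalled in the Appendix. This is handled by a now-standard argument (the continuation principle in equivariant degree theory), but it is where the compactness of $\widetilde\Sigma$, property \textbf{(P5)}(iii), and Lemma \ref{lem:compact-oper} are all used simultaneously.
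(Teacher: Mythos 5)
Your skeleton (argue by contradiction, use a Kuratowski-type separation, exploit compactness of the solution set and the admissibility supplied by Lemma \ref{lem:compact-oper}) is the same as the paper's, which indeed follows the scheme of Theorem 9.18 of \cite{AED}. But the step you defer to ``a now-standard argument'' is precisely the substance of the proof, and as you have framed it the plan does not close: a single non-trivial zero of $\fF_\varsigma$ for the one fixed $\varsigma$ in the hypothesis cannot be ``promoted'' to a connected branch by Kuratowski's lemma alone. Separation lemmas never create connectivity; they are used to set up a contradiction, and the contradiction must come from the degree again, applied to \emph{other} auxiliary functions. The missing idea is Remark \ref{rem:independ-Urysohn}: $\sdeg(\fF_\varsigma,\Omega)$ is the same (nonzero) element for \emph{every} invariant function $\varsigma$ satisfying \eqref{eq:Urysohn-properties}. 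The paper uses this freedom twice. First, with $\varsigma_q(\alpha,\beta,u)=\|u\|-q$, $0<q<r$, the existence property yields zeros of $\fF$ at every level $\|u\|=q$; compactness of $K=\fF^{-1}(0)\cap\overline\Omega$ then shows that $K$ meets both $A'=\{u=0\}$ and $A''=\{\|u\|=r\}$, which is exactly the hypothesis needed to invoke Proposition \ref{prop:Kurat} (your parenthetical paraphrase of that lemma is also not the statement actually used: under the assumption that no component of $K$ joins $A'$ to $A''$, it produces disjoint open sets $N'\supset A'$, $N''\supset A''$ with $K\subset N'\cup N''$).

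Second, and this is the crux, from that separation one forms $Z'=\overline{A'\cup(K\cap N')}$ and $Z''=\overline{A''\cup(K\cap N'')}$, checks that they are $S^1$-invariant (via connectedness of orbits --- a point you omit, and it matters because the Urysohn function must be invariant for $\fF_\varsigma$ to be equivariant), takes an invariant Urysohn function $\mu$ equal to $1$ on $Z'$ and $0$ on $Z''$, and sets $\varsigma(\alpha,\beta,u)=\|u\|-\mu(\alpha,\beta,u)\,r$. This $\varsigma$ still satisfies \eqref{eq:Urysohn-properties}, so $\sdeg(\fF_\varsigma,\Omega)\neq 0$ and $\fF_\varsigma$ must vanish somewhere in $\Omega$; but any such zero lies in $K$, hence in $N'$ or $N''$, and would have to satisfy $\|u\|=r$ inside $N'$ or $u=0$ inside $N''$, both impossible since $N'\cap N''=\emptyset$. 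That contradiction is what produces a connected compact subset of $K$ joining $\{u=0\}$ to $\{\|u\|=r\}$, and only then does the translation back to Definition \ref{YTM} that you describe become available. Without the varying-$\varsigma$ construction (or an equivalent excision/homotopy argument playing the same role), your plan stops at ``there exists one non-constant periodic solution,'' which is strictly weaker than the existence of a branch.
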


\medskip
\noindent
As in \cite{AED}, the following statement is the main topological ingredient in the proof of Proposition \ref{thm:ode18} (cf. Theorem 3 in \cite{Kurat}, p. 170).

\begin{proposition}[Kuratowski]\label{prop:Kurat}
Let $X$ be a metric space, $A,B \subset X$ two disjoint closed sets in $X$, and $K$ a compact set in $X$ such that $K \cap A \not= \emptyset\not=K \cap B$.
If the set $K$ does not contain a connected component $K_o$ such that $K_o \cap A \not= \emptyset\not=K_o \cap B$, then there exist two disjoint open sets
$V_1$, $V_2$ such that $A \subset V_1$,  $ B \subset V_2$ and $A \cup B \cup K \subset V_1 \cup V_2$.
\end{proposition}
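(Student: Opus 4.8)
The plan is to reduce everything to a statement about the compact set $K$ by itself, to construct there a \emph{clopen} decomposition $K = U \sqcup U'$ that separates $A\cap K$ from $B\cap K$, and then to inflate this decomposition to a pair of disjoint open sets in $X$ using the normality of metric spaces. So first put $A_K = A\cap K$ and $B_K = B\cap K$; these are nonempty, disjoint, and closed in the compact metric (hence compact Hausdorff) space $K$. Since every connected subset of $K$ lies inside a connected component of $K$, the hypothesis that no component of $K$ meets both $A$ and $B$ immediately implies that no connected subset of $K$ meets both $A_K$ and $B_K$. I would then invoke the classical fact that in a compact Hausdorff space the connected component of a point coincides with its quasi-component, i.e.\ with the intersection of all clopen subsets of $K$ containing that point.

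The core step is the clopen separation inside $K$. Fix $x\in A_K$; its component $C_x$ is disjoint from $B_K$, and by the previous paragraph $C_x = \bigcap\{D : D\ \text{clopen in}\ K,\ x\in D\}$. This is an intersection of closed subsets of $K$ contained in the open set $K\setminus B_K$, so by compactness of $B_K$ and the finite intersection property there are finitely many clopen sets $D_1,\dots,D_m$ containing $x$ whose intersection $D_x := D_1\cap\cdots\cap D_m$ is clopen, contains $x$, and is disjoint from $B_K$. The family $\{D_x\}_{x\in A_K}$ is then an open cover of the compact set $A_K$, so finitely many of these sets have union $U$, which is clopen in $K$, contains $A_K$, and is disjoint from $B_K$. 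Setting $U' := K\setminus U$ gives a clopen partition $K = U\sqcup U'$ with $A_K\subset U$ and $B_K\subset U'$.

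Finally, pass back to $X$. Because $K$ is compact and $X$ is metric (hence Hausdorff), $K$ is closed in $X$, so the clopen-in-$K$ sets $U$ and $U'$ are closed in $X$. Consider $F_1 = A\cup U$ and $F_2 = B\cup U'$: both are closed in $X$, their union is $A\cup B\cup K$, and they are disjoint, since $A\cap B = \emptyset$, $U\cap U' = \emptyset$, $A\cap U' = A_K\cap U' = \emptyset$ (as $U'\subset K$), and symmetrically $B\cap U = \emptyset$. A metric space is normal, so there exist disjoint open sets $V_1\supset F_1$ and $V_2\supset F_2$. Then $A\subset V_1$, $B\subset V_2$, $V_1\cap V_2 = \emptyset$, and $A\cup B\cup K = F_1\cup F_2\subset V_1\cup V_2$, which is exactly the assertion.

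The only genuinely topological ingredient is the component $=$ quasi-component theorem for compact Hausdorff spaces used in the first paragraph; if one insisted on a self-contained proof one could reprove it via a standard boundary-bumping argument, but that is a detour I would rather avoid. I expect the step requiring the most care to be the passage from the hypothesis (stated in terms of connected components of $K$) to the clopen separation of $A_K$ and $B_K$ within $K$; once that clopen partition is in hand, the compactness bookkeeping and the final normality argument are routine.
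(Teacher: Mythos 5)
Your proof is correct, but note that the paper does not prove this proposition at all: it is stated as a quoted classical result, with a pointer to Kuratowski's \emph{Topology}, Vol.~II (Theorem 3, p.~170, i.e.\ \cite{Kurat}), and is then used as the topological ingredient in the proof of Proposition \ref{thm:ode18}. So there is no in-paper argument to compare with; what you have supplied is a self-contained proof of the cited theorem, along the standard lines. Your reduction is sound: for $x\in A\cap K$ the component $C_x$ meets $A$, hence by hypothesis misses $B$; the coincidence of components and quasi-components in a compact Hausdorff space together with the finite-intersection (compactness) argument correctly yields a clopen $D_x\subset K$ containing $x$ and disjoint from $B\cap K$; a finite subcover of $A\cap K$ gives the clopen partition $K=U\sqcup U'$ with $A\cap K\subset U$, $B\cap K\subset U'$. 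Passing back to $X$ is also handled correctly: $K$ is closed in the metric space $X$, so $U,U'$ are closed in $X$, the sets $F_1=A\cup U$ and $F_2=B\cup U'$ are disjoint closed sets whose union is $A\cup B\cup K$, and normality of $X$ produces the required disjoint open $V_1\supset F_1$, $V_2\supset F_2$. The only external ingredient is the component-equals-quasi-component theorem, which you flag explicitly and which can indeed be replaced by a boundary-bumping argument; granting it, your argument is complete and makes this step of the paper more self-contained than the bare citation.
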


\noi{\bf Proof of Proposition \ref{thm:ode18}}. Put $$K=\fF^{-1}(0)\cap \overline\Omega.$$ Consider the family of invariant functions $\varsigma_q:\overline\Omega\to \bbR$
given by $$\varsigma_q(\alpha,\beta,u)= \|u\|-q,\; \quad 0\leq q \leq r.$$ Suppose for contradiction, there does not exist a compact connected set $K_o \subset K$ with $K_o \cap \varsigma_0^{-1}(0) \not= \emptyset \not= K_o \cap \varsigma_r^{-1}(0)$. To apply Proposition \ref{prop:Kurat}, we need to show that $K \cap \varsigma_0^{-1}(0) \not= \emptyset \not= K \cap \varsigma_r^{-1}(0)$. Notice that for  any $q\in (0,r)$, $\varsigma_q$ satisfies properties \eqref{eq:Urysohn-properties}, so $\sdeg(\fF_{\varsigma_q},\Omega \neq 0$ (cf Remark \ref{rem:independ-Urysohn} and the assumptions of Proposition \ref{thm:ode18}. By the existence property of $S^1$-degree, for each $q\in (0,r)$ there exists $(\alpha_q,\beta_q,u_q)\in K$ with $\|u_q\|=q.$ Since K is compact, it follows that there exist $(\alpha_0,\beta_0,u_0)\in K\cap \varsigma_0^{-1}(0)$ and $(\alpha_r,\beta_r,u_r)\in K\cap \varsigma_r^{-1}(0)$.

Put $$A^\prime= \{(\alpha,\beta,u)\in \overline\Omega \;:\; \|u\|=0\}, \quad A^{\prime\prime}= \{(\alpha,\beta,u)\in \overline\Omega \;:\; \|u\|=r\}.$$
 Then, by Proposition \ref{prop:Kurat}, there exist open {\it disjoint} sets $N^\prime \supset A^\prime$, $N^{\prime\prime}\supset A^{\prime\prime}$ with $K\subset N^\prime\cup N^{\prime\prime}$. Put
\begin{equation}\label{eq:Z-prime}
Z^\prime = \overline{A^\prime\cup(K\cap N^\prime)}; \quad  Z^{\prime\prime}= \overline{A^{\prime\prime}\cup(K\cap N^\prime)}
\end{equation}
and let us, first, show that $Z^\prime$ is $S^1$-invariant. Notice that $A^\prime$ is invariant. Suppose for contradiction that $K\cap N^\prime$  is not invariant. Then, there exist $u\in K\cap N^\prime$ and $\gamma \in S^1$ such that $(\gamma,u) \notin  K\cap N^\prime$. However, since $K$ is invariant and $K\subset N^\prime\cup N^{\prime\prime}$, it follows that $(\gamma, u) \in K\cap N^{\prime\prime}$. We now have $S^1(u)\subset N^\prime\cup N^{\prime\prime}$ with $S^1(u)\cap N^\prime \not= \emptyset \not= S^1(u)\cap N^{\prime\prime}$
and $N^{\prime} \cap N^{\prime\prime} = \emptyset$, which contradicts the connectedness of $S^1(u)$.  Thus, $Z^\prime$ is invariant as the union of invariant sets.
Similarly, $Z^{\prime\prime}$ is also invariant.

Next, define an invariant Urysohn function $\mu:\overline\Omega \to \bbR$ with the following property:
\begin{equation}\label{eq:mu-function}
\mu(\alpha,\beta,u)=\begin{cases}
1, \quad {\rm if} \; (\alpha,\beta, u)\in Z^{\prime};\\
0, \quad  {\rm if} \; (\alpha,\beta, u)\in Z^{\prime\prime}.
\end{cases}
\end{equation}
Take $\varsigma: \overline\Omega \to \bbR$ defined by
\begin{equation}\label{eq:sigma}
\varsigma(\alpha,\beta,u)= \|u\| - \mu(\alpha,\beta,u)\cdot r.
\end{equation}
Clearly, $\varsigma$ is invariant and satisfies properties  \eqref{eq:Urysohn-properties} (cf Remark \ref{rem:independ-Urysohn} and the assumptions of Proposition \ref{thm:ode18}) so $\sdeg(\fF_\varsigma,\Omega)\neq 0$.  By the existence property of the $S^1$-degree, there exists $(\alpha_\ast, \beta_\ast,u_\ast)$ with $\|u\|- \mu(\alpha_\ast,\beta_\ast, u_\ast)=0$.
Since $K \subset N^\prime \cup N^{\prime\prime}$ and  $N^\prime \cap N^{\prime\prime} = \emptyset$, it follows that either $(\alpha_\ast,\beta_\ast, u_\ast))\in N^{\prime}$ or $(\alpha_\ast,\beta_\ast, u_\ast)\in N^{\prime\prime}$. Assume $(\alpha_\ast,\beta_\ast, u_\ast)\in N^{\prime}$.  Then (cf. \eqref{eq:Z-prime},  \eqref{eq:mu-function}) and \eqref{eq:sigma}),
$\|u_{\ast}\|=r$,  i.e. $(\alpha_\ast,\beta_\ast,u_\ast)\in A^{\prime\prime}\cap N^{\prime}=\emptyset$. Similarly, the assumption
$(\alpha_\ast,\beta_\ast,u_\ast)\in N^{\prime\prime}$ leads to a contradiction.


%

\subsection{Computation of  $\sdeg(\frak F_\varsigma,\Omega)$ via deformations}

Proposition \ref{thm:ode18} reduces the proof of Theorem \ref{thm:main-theorem} to the computation of $\s1deg(\frak F_\varsigma,\Omega)$ and showing that this degree is non-zero. Our goal now is to connect $\s1deg(\frak F_\varsigma,\Omega)$  to spectral properties of $D_xf(\alpha,0)$
(cf.~condition {\bf (P1)}). This will be done in several steps.

\medskip
{\it Step I: Reduction to a circle.} Put $\widetilde{\frak F}_\varsigma(\alpha,\beta,u) = (\varsigma(\alpha,\beta,u), a(\alpha,\beta)u)$ (cf.~\eqref{eq:tilde}).
Since $\varsigma(\alpha,\beta,0) < 0$, it follows from Lemma \ref{lem:compact-oper}(ii)
that $\widetilde{\frak F}_\varsigma$ is $S^1$-equivariantly homotopic to  $\frak F_\varsigma$ on $\Omega_k$.

Take $\widetilde{D}_k$ and $N_k$ from \eqref{eq:tilde} and assume, without loss of generality, that  $N_k$ is homeomorphic to $[-1,1]\times S^1$. Let $(\xi_k,\gamma_k) : N_k \to [-1,1]\times S^1$ be a trivialization
taking $\widetilde{\Sigma}_k$ to $\{1\} \times S^1$.
For any $k$, define three functions $g_k: \widetilde{\cD}_k \to \bbR$, $\widetilde\varsigma_k: \Omega_k \to \bbR$ and $\widetilde\fF_{1k}: \Omega_k  \to \bbR \oplus W$ by
\begin{equation}\label{eq:def-g}
g_k(\alpha,\beta)= \begin{cases} 0,& (\alpha,\beta) \in \widetilde{\cD}_k \setminus N_k;\\
\xi_k(\alpha,\beta) + 1, & (\alpha,\beta) \in N_k;
\end{cases}
\end{equation}
\begin{equation}\label{eq:def-til-sigma}
\widetilde\varsigma_k(\alpha,\beta,u)= g_k(\alpha,\beta)(\|u\|-r)+\|u\| + r;
\end{equation}
\begin{equation}
\fF_{k}^1(\alpha,\beta,u) = (\widetilde\varsigma_k(\alpha,\beta,u), a(\alpha,\beta)u).
\end{equation}
Obviously, the boundary $\partial \Omega_k$ of the domain $\Omega_k$ consists of three pieces:
$$
\partial \Omega_k = \{\|u\| = r\} \; \cup \; \{u = 0, \, \xi_k(\alpha,\beta) = 1\} \; \cup \; \{u\neq 0,\  \xi_k(\alpha,\beta) = 1\}.
$$
On the first piece, $\varsigma_k$ and $\widetilde\varsigma_k$ are both positive, while on the second piece they are both negative.
Also, on the third piece $a(\alpha,\beta)u$ is non-zero. Hence, the vector fields $\fF_{k}^1$ and $\widetilde{\fF}_{\varsigma}$ are not directed oppositely on $\partial \Omega_k$, therefore they are equivariantly homotopic on $\overline{\Omega}_k$.
Define $\Omega_k^1 = N_k \times B_r(0)$. By \eqref{eq:def-g} and \eqref{eq:def-til-sigma}, for all $(\alpha,\beta) \in \widetilde{\cD}_k \setminus N_k$, one has
$\widetilde\varsigma_k > 0$. Hence, by the excision and homotopy properties of the $S^1$-degree (see Appendix),
$\sdeg(\fF_{\varsigma},\Omega_k)=\sdeg(\fF_{k}^1,\Omega_k^1)$.

Define $\eta_k : N_k \to \Sigma_k$ by
$\eta_k(\alpha,\beta) = (\xi_k,\gamma_k)^{-1}(0,\gamma_k(\alpha,\beta))$ and
$\tilde{\fF}_k : \Omega_k^1\to \bbR \oplus W$ by
\begin{equation}\label{eq:def-tildeF}
\widetilde{\fF}_k(\alpha,\beta,u) = (\widetilde{\varsigma}_k(\alpha,\beta,u), a(\eta_k(\alpha,\beta))u).
\end{equation}
Then, by the homotopy property of the $S^1$-degree,
\begin{equation}\label{eq:deg-tildeF}
\sdeg(\fF_\varsigma,\Omega_k)=\sdeg(\widetilde{\fF}_k,\Omega_k^1).
\end{equation}
Observe that formulas \eqref{eq:def-tildeF}, \eqref{eq:deg-tildeF} reduce the computation of $\sdeg(\fF_\varsigma,\Omega_k)$
to studying $S^1$-equivariant homotopy properties of restrictions of $a_k : N_k \to GL^{S^1}_c(W)$ to the zero section
$\Sigma_k \simeq \{0\} \times S^1$, where  $GL^{S^1}_c(W)$ stands for the group of $S^1$-equivariant linear completely continuous vector fields in $W$.

\medskip
{\it Step II: Computation of the degree.}
For any $m \in \mathbb N$, put $W^m= V \oplus W_1 \oplus ... \oplus W_m$ (cf.~\eqref{eq:Fourier-decomp}).
Combining the compactness of the operator $a$ with the suspension property of the $S^1$-equivariant degree (see Appendix), one can find a sufficiently large $m$ such that the field $\widetilde{\fF}_k$ is equivariantly homotopic to the compact field
$\mathfrak F_k^2 : \overline{\Omega}_k^1 \to \mathbb R \oplus W$ defined  by
\begin{equation}\label{eq:finite-approx}
\mathfrak F_k^2(\alpha,\beta,u) = (\widetilde{\varsigma}_k(\alpha,\beta,u), \widetilde{a}(\eta_k(\alpha, \beta))u),
\end{equation}
where
$\widetilde{a}_k(\alpha,\beta)=  a_k(\alpha,\beta)|_{W^m} + {\rm Id} |_{(W^m)^{\perp}}$. Put
\begin{equation}
\widetilde{a}_k^0(\alpha,\beta):= \widetilde{a}_k(\alpha,\beta)|_V; \quad  \widetilde{a}_k^l(\alpha,\beta):= \widetilde{a}_k(\alpha,\beta)|_{W_l}, \; l > 0.
\end{equation}
Fix some $\overline{\alpha}$ between $\alpha_-$ and $\alpha_+$. By condition {\bf (P3)}, the map $\widetilde{a}_k^0 : N_k \to GL(V)$ is homotopic to the constant map
$\overline{a} : N_k \to GL(V)$ given by  $\overline{a}(\alpha,\beta) \equiv -D_xf(\overline{\alpha},0)$. Now, we are going to use formula \eqref{thm:main-computational-Theorem} presented in Appendix.
To this end, one needs to separate the ``contribution" of the zero Fourier mode to the $S^1$-degree from other modes. Define
\begin{equation}
\Omega_k^{2} = \Omega_k^1 \cap \big(\mathbb R^2 \oplus W^m), \quad  \Omega_k^{\ast}= B \times \Omega_k^{2},
\end{equation}
where $B$ is the unit ball in $V$. Also, define $\hat{\mathfrak F} _k : \overline{\Omega_k^2} \to \mathbb R \oplus W$
and $\mathfrak F^{\ast}_k : \overline{\Omega_k^{\ast}} \to V \oplus \big(\mathbb R \oplus W\big)$ by
\begin{equation}
\hat{\mathfrak F}_k (\alpha,\beta,u)= (\widetilde{\varsigma}_k(\alpha,\beta,u), \bigoplus_{l = 1}^m \widetilde{a}_k^l(\eta_k(\alpha,\beta))u), \quad
\mathfrak F^{\ast}_k = \overline{a}  \times  \hat{\mathfrak F}_k.
\end{equation}
Combining the suspension property of the $S^1$-degree with the product formula (see \cite{AED}, Theorem 6.8), one obtains
\begin{equation}\label{eq:suspension-product}
\sdeg(\mathfrak F_{\varsigma},\Omega_k) = \sdeg(\mathfrak F^{\ast}_k,\Omega_k^{\ast}) = \sign (\det(\overline{a})) \cdot
\sdeg(\hat{\mathfrak{F}}_k, \Omega_k^2).
\end{equation}
Further, by applying  formula \eqref{thm:main-computational-Theorem},
\begin{equation}
\sdeg(\mathfrak F_{\varsigma},\Omega_k) = \sign (\det(\overline{a})) \cdot \sum_{l=1}^m \big(\deg(\text{det}_{\mathbb C} (\widetilde{a}_k^l), \widetilde\cD_k) (\bbZ_l)\big).
\end{equation}
Finally, applying the additivity property of $\sdeg$ and the Brouwer degree, we get
\begin{equation}\label{eq:rev-in-science}
\begin{aligned}
 \sdeg(\mathfrak F_{\varsigma},\Omega) &= \sign (\det(\overline{a})) \cdot \sum_k \sum_{l=1}^m \big(\deg(\text{det}_{\mathbb C} (\widetilde{a}_k^l), \widetilde\cD_k) (\bbZ_l)\big)\\
 &= \sign (\det(\overline{a})) \cdot \sum_{l=1}^m \big(\deg(\text{det}_{\mathbb C} (a_l), \widetilde\cD) (\bbZ_l)\big),
\end{aligned}
\end{equation}
where $a_l(\alpha,\beta)= a(\alpha,\beta)|_{W_l}$.

\medskip

{\it Step III: Reduction to crossing numbers.} Observe (see \cite[p.~266]{AED}) that
$$
a_l(\alpha,\beta)= \frac{1}{il\beta}[il\beta\,\id -D_xf(\alpha,0)].
$$
Put $a'_l(\alpha,\beta)=il\beta \cdot a_l(\alpha, \beta)$. Since $\beta >0$, the map $a'_l$ is homotopic to $a_l$. Note (cf.~\eqref{def:Lambda}) that
$\text{det}_{\mathbb C}(a'_1(\alpha,\beta))=\Lambda(\alpha,\beta,0)$. Finally (cf.~condition \textbf{(P4)}(iii) and \eqref{eq:first-crossing}),
$$
\deg(\text{det}_{\mathbb C} (a_1), \widetilde\cD) = \frac{\mathfrak{t}_--\mathfrak{t}_+}{2}\not=0.
$$
Hence (cf.~\eqref{eq:rev-in-science}), $\sdeg(\mathfrak F_{\varsigma},\Omega) \neq 0$. The application of Proposition \ref{thm:ode18} completes the proof.

\section{Proof of Theorems \ref{thm:corollaries-all} and \ref{theor:Haritonov2}}\label{sec:proofs-remaining}

\subsection{Proof of Theorem \ref{thm:corollaries-all}(a,b,c)}
(a) Our goal is to construct a domain $\cP$ satisfying \textbf{(P4)} in such a way that \textbf{(P5$^{\prime}$)} would imply \textbf{(P5)}.
To this end, take $\cD_k$ provided by {\bf(P5$^\prime$)} and $\alpha_-$, $\alpha_+$ provided by \textbf{(P4$^{\prime}$)}. Next, take a sufficiently large $M > 0$ to ensure that
\begin{equation}\label{eq:number-M}
\Lambda^{-1}(0) \subset \{[\alpha_-,\alpha_+] \times B_M(0)\}=:B,
\end{equation}
where $B_M(0)$ stands for the closed ball of radius $M$ centered at the origin in the $(\beta,\tau)$-plane.
Also due to compactness, there exists a $\delta > 0$ such that
\begin{equation}\label{eq:delta}
\Lambda^{-1}(0) \cap\{0 \leq  \tau \leq \delta\} \subset {\rm int} (\bigcup \cD_k) \times \{0 \leq \tau \leq\delta\}.
\end{equation}
Define
\begin{equation}\label{eq:P}
\cP= \Big(B \cap \{\tau \geq \delta\}\Big) \cup \bigcup \cD_k \times \{0 \leq \tau \leq\delta\}.
\end{equation}
Since $B \cap  \cD_k \times \{0 \leq \tau \leq\delta\}$ is homeomorphic to a disc, $\cP$  satisfies \textbf{(P4)}(i). By the choice of $M$ and $\delta$
(see \eqref{eq:number-M} and \eqref{eq:delta}), $\cP$  satisfies \textbf{(P4)}(ii). Also, \textbf{(P4)$^{\prime}$} guarantees \textbf{(P4)}(iii). Finally, by construction,
 $\cP$ and $\cD_k$ satisfy {\bf(P5)}(ii).

\medskip
\noindent
(b) To prove Part (b), it suffices to deduce \textbf{(P5$^{\prime}$)} from \textbf{(P5$^{\prime\prime}$)}.
Notice that $R(f)$ is the set of roots of polynomials with coefficients parameterized  by $\alpha \in [\alpha_-,\alpha_+]$. Hence, the coefficients of these polynomials are uniformly bounded. Observe also that the leading coefficient of these polynomials is identically equal to $1$, therefore $R(f)$
is a compact set.

For any $\varepsilon > 0$, there exists a sufficiently large $m$
 such that $\bigcup^\infty_{k=m}  S_i(f)\subset [\alpha_-,\alpha_+] \times \{\beta <  \varepsilon\} $.
  Since $[\alpha_-,\alpha_+]$ is compact and
  $D_xf(\cdot,0)$ is non-singular,
 it follows that $R(f)$ is uniformly separated from $[\alpha_-,\alpha_+] \times \{ \beta <  \varepsilon\} $ provided that
$\varepsilon$ is small enough. On the other hand, the sets $\bigcup^{m-1}_{k=2}  S_i(f)$ and $R(f)$ are
compact and disjoint (see condition \textbf{(P5$^{\prime\prime}$)}), so they can be uniformly separated.
Hence there  exists a  neighborhood $N_\varepsilon(R(f))$ of $R(f)$ in $[\alpha_-,\alpha_+] \times \mathbb R_+$ such that
$N_\varepsilon(R(f))\cap S(f) = \emptyset$. Without loss of generality, one can assume that $N_\varepsilon(R(f))$ is a finite union
of discs, therefore, the complement to $N_\varepsilon(R(f))$ in $[\alpha_-,\alpha_+]\times \bbR_+$ has finitely many bounded connected components, say, $\{U_i\}_{i=1}^K$. Set
$$
\cD= N_\varepsilon(R(f)) \cup \bigcup_{i=1}^K U_i.
$$
By construction, $\overline{\cD}$ is a finite collection of disjoint sets homeomorphic to closed discs (denoted $\cD_k$) and $\partial \cD_k \subset \partial N_\varepsilon(R(f))$, thus
$\cD_k$ satisfies condition \textbf{(P5$^{\prime}$)}. Hence, the result follows from Theorem \ref{thm:corollaries-all}(a).

\medskip
\noindent
(c) To prove part (c), it suffices to deduce \textbf{(P5$^{\prime\prime}$)} from \textbf{(P5$^{\prime\prime\prime}$)}. To this end, assume, by contradiction, that  \textbf{(P5$^{\prime\prime}$)} is not satisfied. Then, there exist a point $(\alpha,\beta) \in [\alpha_-,\alpha_+]\times \bbR_+$ and an integer $k\ge 2$ such that $(\alpha,\beta), (\alpha, k \beta) \in R(f)$. This contradicts \textbf{(P5$^{\prime\prime\prime}$)}.

\subsection{Proof of Theorem \ref{thm:corollaries-all}(d)}
Let $\cP$ be the set provided by condition {\bf (P4)}. Our first goal is to construct $\cP^{\prime} \supset \cP$ such that (a) $\cP^{\prime}$ satisfies {\bf (P4)}; and, (b)
$\cP^{\prime}_0$  is a disjoint union of finitely many sets homeomorphic to a closed disc (cf.~\eqref{eq:P-0-set}). To this end,  without loss of generality (use a small perturbation of $\cP$ if necessary), one can assume that $\cP_0 \subset [\alpha_-,\alpha_+]\times \bbR_+$ is a disjoint union $\cup_{i=1}^m K_i$,  where $K_i$ is a $(\nu_i + 1)$-connected compact domain. Using the same surgery argument as in the proof of Alexander's tame sphere Theorem (see, for example, \cite{Calegari}, Theorem 4.34), one can construct $\cP^{\prime}$ satisfying (a) and (b).

Our next goal is to construct a finite collection of discs $ \cD_k \subset [\alpha_-,\alpha_+]\times \bbR_+$ satisfying {\bf (P5)}.  Take $R$ and $S$ given by
\eqref{eq:def_R_S}. Using the same argument as in  the proof of Theorem \ref{thm:corollaries-all}(b) above, one can construct a sufficiently small neighborhood
$N_{\varepsilon}(R \cap \cP^{\prime}_0)$ of the intersection $R \cap \cP^{\prime}_0$ such that $N_{\varepsilon}(R \cap \cP^{\prime}_0) \cap S = \emptyset$ and $N_{\varepsilon}(R \cap \cP^{\prime}_0) \subset \cP_0^{\prime}$
(cf.~condition ${\bf (P5^{\prime\prime})}$). Take $ C= [\alpha_-,\alpha_+]\times \bbR_+ \setminus \overline{N_{\varepsilon}(R \cap \cP^{\prime}_0)}$. By the standard compactness argument, without loss of generality, assume that  $C$ splits into finitely many connected components $C = C_0 \cup C_1 \cup ... \cup C_r$, where $C_0$ stands for the (unique) unbounded component.
Put $\cD= \cup_{i=1}^rC_i \cup \overline{N_{\varepsilon}(R \cap \cP^{\prime}_0)}$. Let us show that $\cD$ is a finite union of discs. By construction, $\cD = \cup_{i=1}^k\cD_k$ is a finite disjoint union of regular closed subsets (i.e., each  $\cD_k$ is a closure of its interior). To show that each $\cD_k$ is contractible, take a closed curve $\gamma \subset \cD$ and assume that it is not contractible to a point inside $\cD_k$. Then, there exists a set $K \subset [\alpha_-,\alpha_+]\times \bbR_+ \setminus \cD_k$ bounded by $\gamma$. However, this contradicts the construction of $\cD$. Therefore, $\cD_k$ satisfies condition {\bf (P5)}(i). Also, since $\partial \cD_k \subset \partial N_{\varepsilon}(R \cap \cP^{\prime}_0)$ for any $k$, it follows that
$\cD_k$ satisfies {\bf (P5)}(iii). Finally, to show that $\cD_k$ satisfies {\bf (P5)}(ii), observe that $R \cap \cD \supset R\cap \cP_0^{\prime}$. Since, $\cD \subset \cP_0^{\prime}$, one has
$R \cap \cD = R\cap \cP_0^{\prime}$ and {\bf (P5)}(ii) follows.

\subsection{Proof of Theorem \ref{theor:Haritonov2}}
Clearly, if \eqref{interval_equation} satisfies \textbf{(R0)-(R4)}, then any  selector \eqref{eq:selector} belonging to \eqref{interval_equation} satisfies
 \textbf{(P0)-(P4)}. Similarly, if \eqref{interval_equation} satisfies \textbf{(R5$^{\prime}$)} (resp.,   \textbf{(R5$^{\prime\prime}$, \textbf{(R5$^{\prime\prime\prime}$)}}),
then any  selector \eqref{eq:selector} belonging to \eqref{interval_equation} satisfies \textbf{(P5$^{\prime}$)} (resp.,   \textbf{(P5$^{\prime\prime}$, \textbf{(P5$^{\prime\prime\prime}$)}}).
The result follows.

\section{Appendix: $S^1$-degree}

Let $G$ be a compact Lie group acting on a metric space $X$ (see, for example, \cite{Bre}). For any $x \in X$, put $G(x)=\{gx \in X \; : \; g \in G\}$ and call it the orbit of $x$.
 A set $Z \subset X$ is called $G$-invariant (in short, invariant) if it contains all its orbits.
Assume $G$ acts on two metric spaces $X$ and $Y$. A continuous map $f : X \to Y$ is called $G$-equivariant
if $f(gx) = gf(x)$ for all $x \in X$ and $g \in G$. In particular, if the action of $G$ on $Y$ is trivial, then the equivariant map is called $G$-invariant. We refer to \cite{Bre,tD,AED}
(resp. \cite{BtD,GolSchSt,GS,AED}) for the equivariant topology (resp. representation theory) background frequently used in the present paper.

\medskip

Let $V$ be an orthogonal $S^1$-representation. Suppose that an open bounded invariant set $\Omega \subset \bbR \oplus V$ is invariant with respect to the $S^1$ action, where we assume that $S^1$ acts trivially on $\bbR$. We say that an equivariant map $f: \overline{\Omega} \to V$ is admissible if $f^{-1}\{0\} \cap \partial \Omega = \emptyset$. In this case,  $(f,\Omega)$ is called an admissible pair. Similarly, a continuous map $h : [0,1] \times \overline{\Omega} \to V$ is called an admissible (equivariant) homotopy if $h(t,\cdot)$ is admissible for any $t \in [0,1]$. It is possible to axiomatically define a unique function $\sdeg$ which assigns to each admissible pair a formal sum of finite cyclic groups with integer coefficients (cf.~\cite{AED}, pp. 109, 113 ). The following is a partial list of the axioms:

\medskip

\noi\textbf{(A1) (Existence)} If $\sdeg(f,\Omega) = \sum_{l=1}^k n_{l_k}(\bbZ_{l_k})$ and $n_{l_k} \neq 0$ for some $k$, then there exists an $x\in \Omega$ such that $f(x) = 0$ and
$\mathbb Z_{l_k} \subset G_x$.

\medskip

\noi\textbf{(A2) (Homotopy)} Suppose that $h:[0,1]\times \overline{\Omega} \to V$ is an admissible equivariant homotopy; then,
$\sdeg(h(t,\cdot,\cdot),\Omega) = const$.

\medskip

\noi\textbf{(A3)(Additivity)} For two invariant open disjoint subsets $\Omega_1,\Omega_2 \subset \Omega$ with $f^{-1}(0)\cap \Omega\subset  \Omega_1\cup\Omega_2 $, $\sdeg(f,\Omega)= \sdeg(f,\Omega_1)+ \sdeg(f,\Omega_2)$.

\medskip
\noi
\textbf{(A4)(Normalization)} Take $\mathcal V_1$ (cf.~Subsection \ref{subsec:S1-representation}) and define the set $\Omega_0$ and map $b : \mathbb R \oplus \mathcal V_1 \to \mathcal V_1$ by
$$
\Omega_0 = \Big\{(t,z) \in \mathbb R \oplus \mathcal V_1 \; : \; |t| < 1, \;\; {1 / 2} < \|z\| < 2\Big\}, \quad b(t,z)= (1 - \|z\| + it)\cdot z.
$$
Then, $\sdeg(b,\Omega_0)=  1 \cdot (\mathbb Z_1)$.

\medskip
\noi
\textbf{(A5)(Suspension)} Suppose that $A$ is an orthogonal $S^1$-representation and $U$ is an open bounded invariant neighborhood of zero in $A$.
Then, $$\sdeg(f \times {\rm Id}, \Omega \times U) = \sdeg(f,\Omega).$$

Using the equivariant version of the standard Leray-Schauder projection, one can define the $S^1$-degree to $S^1$-equivariant compact vector fields (see
\cite{AED, IV-B} for details). Combining the axioms of the $S^1$-degree with some standard homotopy theory techniques, one can reduce the computation of the $S^1$-degree of the maps naturally associated with the system undergoing the Hopf bifurcation to the computation of the Brouwer degree. To be more precise,  let $V$ be an orthogonal
$S^1$-representation with $V^{S^1}=\{v\in V:\ (\gamma, v)= v \ \ \forall \gamma \in S^1\}=\{0\}$.  Take the isotypical decomposition
$$
V = V_{k_1} \oplus V_{k_2} \cdots \oplus V_{k_s},$$
where each $V_{k_j}$ is modeled by the $k_j$-th irreducible representation. Define
$$\cO=\{(\lambda,v) \in \bbC \oplus V:\|v\|<2\ , \ {1}/{2}<|\lambda|<4\}.$$
Now, consider a map $a:S^1 \to GL^{S^1}(V)$ and define $a_j:S^1 \to GL^{S^1}(V_{k_j})$ by the formula $a_j(\lambda)= a(\lambda)_{|V_{k_j}}$ (see, \cite[p.~284]{AED}). Let $f_a:\overline\cO\to \bbR\oplus V$ be an
$S^1$-equivariant map defined by
$$f_a(\lambda,v)= \left(|\lambda|(\|v\|-1)+\|v\|+1, a\left(\frac{\lambda}{|\lambda|}\right)v \right).$$
The following formula plays an important role in the proof of Theorem \ref{thm:main-theorem}:
\begin{equation}\label{thm:main-computational-Theorem}
\s1deg(f_a,\cO)= \sum_{j=1}^s \left( \deg(\text{det}_{\mathbb C}\circ a_j,B)\right)(\bbZ_{k_j}),
\end{equation}
where $B$ stands for the unit ball in $\mathbb C$ (cf. \cite{AED}, Theorem 4.23).
















\subsection*{Acknowledgements}
The authors were supported by National Science Foundation grant DMS-1413223.
WK was also supported by Chutian Scholar Program at China Three Gorges University, Yichang,
Hubei (China).

\end{document}